\newtheorem{prop}{Proposition}[section]
\newtheorem{thm}[prop]{Theorem}
\newtheorem{lemma}[prop]{Lemma}
\newtheorem{defi}[prop]{Definition}
\theoremstyle{remark}
\newtheorem{rmk}[prop]{Remark}
\theoremstyle{definition}
\numberwithin{equation}{section}
\renewcommand{\P}{\mathbb{P}}
\newcommand{\E}{\mathbb{E}}
\newcommand{\F}{\mathfrak{F}}
\renewcommand{\H}{\mathcal{H}}
\newcommand{\fH}{\mathfrak{H}}
\renewcommand{\L}{\mathcal{L}}
\newcommand{\erre}{\mathbb{R}}
\newcommand{\enne}{\mathbb{N}}
\renewcommand{\epsilon}{\varepsilon}
\newcommand{\ip}[2]{\langle#1,#2\rangle}
\newcommand{\bip}[2]{\Big\langle#1,#2\Big\rangle}
\newcommand{\tr}{\mathop{\mathrm{Tr}}\nolimits}
\title{Local well-posedness of Musiela's SPDE with L\'evy noise}
\author{Carlo Marinelli\thanks{Institut f\"ur Angewandte Mathematik,
    Universit\"at Bonn, Wegelerstr. 6, D-53115 Bonn, Germany. URL:
\texttt{http://www.uni-bonn.de/$\sim$cm788}}}
\date{\normalsize April 7, 2007. Revised April 21, 2008 and June 11, 2008.}
\begin{document}
\maketitle

\begin{abstract}
We determine sufficient conditions on the volatility coefficient of
Musiela's stochastic partial differential equation driven by an
infinite dimensional L\'evy process so that it admits a unique local
mild solution in spaces of functions whose first derivative is
square integrable with respect to a weight.
\medskip\par\noindent
\emph{2000 Mathematics Subject Classification:} 60G51, 60H15, 91B28.
\smallskip\par\noindent
\emph{Keywords and phrases:} HJM model, Musiela's stochastic PDE,
stochastic PDEs with jumps, maximal inequalities.
\end{abstract}




\section{Introduction}
The aim of this paper is to find sufficient conditions on the
volatility coefficient of Musiela's stochastic partial differential
equation driven by L\'evy noise such that it admits a unique mild
(local) solution.  In particular, denoting by $u(t,x)$ the forward
rate prevailing at time $t$ with maturity $t+x$, $x\geq 0$, we shall
consider the mild formulation of a stochastic PDE of the form
\begin{equation}
  \label{eq:musso}
du(t,x) = [u_x(t,x) + f(t,x,u^x(t))]\,dt + \ip{\sigma(t,x,u(t,x))}{dM(t)},  
\end{equation}
where $u^x(t):=\{u(t,y):\; y\in[0,x]\}$ and $M$ is a locally square
integrable martingale with independent increments taking values in a
Hilbert space with inner product $\ip{\cdot}{\cdot}$. Moreover,
no-arbitrage conditions uniquely determine the functional form of $f$
in terms of $\sigma$ (see e.g. \cite{BDMKR,JZ,CM-musdet}). The
properties of this SPDE in the case of $M$ being a Wiener process are
rather well studied: let us just mention \cite{filipo} for a
self-contained treatment of existence of solutions, no-arbitrage
conditions, and finite dimensional realizations, \cite{GM-review} for
connections with Kolmogorov equations and option pricing, and
\cite{tehranchi} for the ergodic properties. A basic question to ask
is clearly whether a solution exists to (\ref{eq:musso}), and under
what conditions on $\sigma$ and $f$. If $M$ is a Wiener process, one
can draw on a large body of results (see e.g. \cite{DZ92}) to
establish existence of solutions. In the general case of discontinuous
$M$, the situation turns out to be more involved, not only because the
no-arbitrage ``constraint'' on the drift term $f$ is relatively more
complicated, but also (perhaps mainly) because the theory of
stochastic PDEs driven by jump noise is not so well developed (see
however, for instance, \cite{Gyo-semimg} and \cite{MP} for the
variational approach, \cite{LR-heat} for an analytic approach based on
generalized Mehler semigroups, and \cite{AMR2,PZ-levico} for
the ``mild'' approach).

The literature on HJM models driven by L\'evy processes has
considerably grown in the last few years: let us just cite, among
others, \cite{ALM,EbeJacRai,EbeOez,EbeRai},
and the seminal paper \cite{BDMKR} (where general random measures are
added to a finite dimensional Brownian motion as driving noises). All
these papers essentially take as a starting point the existence of an
HJM model with jumps, and concentrate on deriving no-arbitrage and
completeness properties. Even though several of the papers just
mentioned provide assumptions on the coefficients guaranteeing
existence, an explicit study of sufficient conditions on the data of
the corresponding Musiela stochastic PDE driven by L\'evy noise
guaranteeing (local or global) well-posedness seems to be missing, and
it is the main motivation of the present paper. As mentioned above, we
shall approach this problem studying the (Musiela-type) equation
(\ref{eq:musso}) using the semigroup approach to SPDEs (see e.g.
\cite{DZ92}). Since this approach is mainly developed for the case of
Brownian driving noise, we shall also need to prove some results on
mild solutions of SPDEs driven by L\'evy processes. Our main result is
essentially local existence and uniqueness for (\ref{eq:musso}) in an
appropriate space of absolutely continuous functions, under suitable
regularity and growth assumptions on $\sigma$ and on the L\'evy
measure of the noise $M$.

After this paper was finished, we found a preprint of Filipovi\'c and
Tappe \cite{FilTap} where similar problems are investigated. However,
these authors treat only the case of finitely many independent L\'evy
noises, and obtain solutions in an $L_2$ setting only. The techniques
are also somewhat different, and ours seem to apply to more general
equations as well (we refer in particular to the results of section
\ref{sec:aux}). Moreover, a referee drew our attention to a preprint
of Peszat and Zabczyk \cite{PZ07}, which also deals with existence
for Musiela's SPDE. As in \cite{FilTap}, the L\'evy process is finite
dimensional and solutions are obtained in an $L_2$ setting. However,
in contrast to \cite{FilTap} and the present paper, the state space is
not contained in the space of continuous functions, hence irregular
forward curves seem to be allowed.

The paper is organized as follows: in section \ref{sec:mr} we
introduce a general HJM model driven by an infinite dimensional L\'evy
noise via a Musiela-type SPDE, and we state two existence and uniqueness
results. Section \ref{sec:aux} contains general results on existence
and uniqueness of mild solutions of SPDE with Lipschitz
nonlinearities, which may be of independent interest. Important tools
are maximal inequalities of Burkholder type, which are also proved in
this section. Some more proofs are collected in section
\ref{sec:proofs}, and section \ref{sec:ex} gives two examples covered
by our results.

Let us conclude this introduction with some words about notation.  By
$a \lesssim b$ we mean that there exists a constant $N$ such that $a
\leq Nb$. To emphasize that the constant $N$ depends on a parameter $p$ we
shall write $N(p)$ and $a \lesssim_p b$. Generic constants, which may
change from line to line, are denoted by $N$.  Given two separable
Hilbert spaces $H$, $K$ we shall denote by $\mathcal{L}(K,H)$,
$\mathcal{L}_0(K,H)$, $\mathcal{L}_1(K,H)$ and $\mathcal{L}_2(K,H)$
the space of linear, bounded linear, trace-class, and Hilbert-Schmidt
operators, respectively, from $K$ to $H$.  $\mathcal{L}_1^+$ stands
for the subset of $\mathcal{L}_1$ of positive operators. We shall
write $\mathcal{L}_1(H)$ in place of $\mathcal{L}_1(H,H)$, and
similarly for the other spaces. The Hilbert-Schmidt norm of $F\in\L_2$
is denoted by $|F|_2$. Given a self-adjoint operator
$Q\in\mathcal{L}_1^+(K)$, we denote by $\mathcal{L}_2^Q(K,H)$ the set
of all (possibly unbounded) operators $B:Q^{1/2}K\to H$ such that
$BQ^{1/2}\in\mathcal{L}_2(K,H)$.  The characteristic function of a set
$A$ is denoted by $\chi_A$, and $\chi_r$ stands for the characteristic
function of the set $B_r(H):=\{x\in H:\;|x|\leq r\}$, where $H$ is a
Hilbert space.
We denote by $|\cdot|_p$ the norm of the usual $L_p(\erre_+)$ spaces,
$1\leq p \leq \infty$.  Given a continuously differentiable increasing
function $\alpha:\erre_+\to[1,\infty)$ such that $\alpha^{-1/3}\in
L_1(\erre_+)$, we define $L_{2,\alpha}^n:=L_{2,\alpha}^n(\erre_+)$ as
the space of distributions $\phi$ on $\erre_+$ such that
$$
|\phi|^2_{n,\alpha} := \int_0^\infty |\phi^{(n)}(x)|^2\alpha(x)\,dx
< \infty.
$$
We set $|\cdot|_\alpha:=|\cdot|_{0,\alpha}$. Moreover, the function
$\alpha$ has to be considered fixed once and for all.  For any Hilbert
space $K$, we shall say that $\phi:\erre_+\to K$ belongs to
$L_{2,\alpha}^n(\erre_+,K)$ if
$$
|\phi|^2_{n,\alpha,K} := \int_0^\infty |\phi^{(n)}(x)|_K^2\alpha(x)\,dx
< \infty,
$$
and we shall write $|\cdot|_{\alpha,K}:=|\cdot|_{0,\alpha,K}$.

\noindent We shall use the symbol $D\varphi$ to denote the Fr\'echet derivative
of a function $\varphi$.

\section{Setting and main results}\label{sec:mr}
Let us begin stating precise assumptions on equation (\ref{eq:musso}).
We are given a real separable Hilbert space $K$ and a filtered
probability space $(\Omega,\mathcal{F},\mathbb{F},\P)$,
$\mathbb{F}=(\mathcal{F}_t)_{t\in[0,T]}$, on which a right continuous
square integrable martingale $M$ with values in $K$ is defined.  We
shall denote by $Q \in \L_1^+(K)$ the covariance
operator of $M$, and by
$(e_k)_{k\in\enne}$ a (fixed) orthonormal basis of $K$. For any vector or
function $\phi$ taking values in $K$ we set $\phi^k:=\ip{\phi}{e_k}$.
Moreover, we assume that $M$ has stationary independent increments and
satisfies an exponential integrability condition. In particular, let
$r>0$ be fixed, and assume that there exist positive constants
$\delta>1$ and $C$ such that
\begin{equation}
  \label{eq:mah}
  \E e^{|\ip{\zeta}{M(1)}|} < C \qquad \forall \zeta \in B_{\delta r}(K).
\end{equation}
Let us also define the function
\begin{equation}    \label{eq:pissi}
\psi(\zeta):=\log\E e^{\ip{\zeta}{M(1)}},
\qquad \zeta\in B_{\delta r}(K).
\end{equation}
The function $\psi$ admits the representation
\[
\psi(\zeta) = - \frac12 \ip{R\zeta}{\zeta}
+ \int_K (e^{\ip{\zeta}{x}} - 1 - \ip{\zeta}{x})\,m(dx),
\]
where $R$ and $m$ denote the covariance operator of the Brownian
component and the L\'evy measure of $M$, respectively. Given a real
number $p\geq 2$, we shall write $M \in \mathcal{M}_p$ if $M(0)=0$ and
\[
\int_K |x|^q\,m(dx) < \infty \qquad \forall q\in[2,p].
\]
The (random) function $\sigma:[0,T]\times\erre_+\times\erre \to K$ is
measurable, predictable, and satisfies
$$
\int_0^x \sigma(t,y,u(y))\,dy \in B_r(K) \qquad
\forall (t,x,u(\cdot)) \in [0,T] \times \erre_+ \times L_{2,\alpha}^1
$$
almost surely. 

From now on we shall assume that the measure $\P$ is such that
discounted zero-coupon bond prices are local martingales. Let us
recall that denoting by $B(t,x)$ the price at time $t\geq 0$ of a
zero-coupon bond paying one at time $t+x$ (with $x \geq 0$), the
following identity holds
\[
B(t,x) = \exp\Big( - \int_0^x u(t,y)\,dy\Big).
\]
Defining the mapping
\begin{equation}\label{eq:g}
g(\nu)(t,x) = \bip{\nu(t,x)}{D\psi\big(-\int_0^x\nu(t,y)\,dy\big)}
\end{equation}
on functions $\nu:[0,T]\times\erre_+\to K$ such that $\int_0^x
\nu(t,y)\,dy \in B_r(K)$ for all $(t,x) \in [0,T]\times\erre_+$, the
following functional relationship holds between $f$ and $\sigma$:
$$
f(t,x) = g(\sigma)(t,x).
$$
Note that in (\ref{eq:g}) the Fr\'echet derivative $D\psi$ is well
defined thanks to proposition \ref{prop:dui} below.

\noindent Clearly, writing $\sigma(t,x) \equiv \sigma(t,x,u(x))$ for some
function $u$, the above expression for $f$ is equivalent to
$$
f(t,x,u(\cdot)) = 
\bip{\sigma(t,x,u(x))}{D\psi\big(-\int_0^x\sigma(t,y,u(y))\,dy\big)}.
$$
In particular note that, in view of the previous expression, in
(\ref{eq:musso}) we cannot write, in general, $f(t,x,u(t,x))$, but we
must write instead $f(t,x,u^x(t))$. More regularity assumptions on
$\sigma$, hence on $f$, will be stated or proved as needed.  Finally,
equation (\ref{eq:musso}) is supplemented with an initial condition
$u(0,x)=u_0(x)$, with $u_0$ an $L_{2,\alpha}^1$-valued random variable
measurable with respect to $\mathcal{F}_0$.

We shall rewrite the SPDE (\ref{eq:musso}) as an abstract stochastic
differential equation in the space $H=L_{2,\alpha}^1$, which becomes a
separable Hilbert space when endowed with the inner product
$$
\ip{\phi}{\psi}_H := \int_{\erre_+} \phi'(x)\psi'(x)\,\alpha(x)\,dx
+ \phi(0)\psi(0).
$$
We shall often use the following properties of the space $H$, for a
proof of which we refer to \cite[sect.~5.1]{filipo}. In particular, for
a function $\phi \in H$, we have
\begin{eqnarray}
\label{eq:pippo-inf}
|\phi|_\infty &\lesssim& |\phi|_H,\\
\label{eq:pippo-1}
|(\phi-\phi(\infty))|_1 &\lesssim& |\phi|_H,\\
\label{eq:pippo-4}
|(\phi-\phi(\infty))^2|_\alpha &\lesssim& |\phi|_H^2,
\end{eqnarray}
where the constants depend only on $\alpha$.
This state space has apparently been first used by
Filipovi\'c \cite{filipo}. Other authors have considered
related SPDEs in different function spaces, e.g.  in weighted $L_2$
spaces, weighted Sobolev spaces, or fractional Sobolev spaces (see
\cite{GM-review}, \cite{vargiolu}, \cite{EkeTaf} respectively).

Let us define on $H$ the operator $A:\phi \mapsto \phi'$, with domain
$D(A)=L^1_{2,\alpha}\cap L^2_{2,\alpha}$, which generates the
semigroup of right shifts $e^{tA}\phi(x):=\phi(x+t)$, $t\geq 0$.
We can formally rewrite Musiela's SPDE (\ref{eq:musso}) in the following
abstract form:
\begin{equation}
\label{eq:ee}
du(t) = [Au(t) + f(t,u(t))]\,dt + B(t,u(t))\,dM(t),
\quad u(0)=u_0,
\end{equation}
where $f:[0,T] \times H \to H$ is defined by $x \mapsto
g(\sigma)(t,x)$ (with a slight but harmless abuse of
notation), and $B:[0,T] \times H \to \mathcal{L}(K,H)$ is defined by
$[B(t,u)\phi](x) = \ip{\sigma(t,x,u(x))}{\phi}_K$, $\phi\in K$.
This rewriting is, for the time being, only formal, because one needs
to prove that the images of $f$ and $B$ are actually contained in $H$
and $\L_Q^2(K,H)$, which is not true without further assumptions. For
now, however, we content ourselves with the formal equation
(\ref{eq:ee}), noting that it can be made rigorous under
conditions that will be given in theorems \ref{thm:1} and \ref{thm:2}
below.

The concept of solution we shall work with and the space where mild
solutions are sought are defined next.
\begin{defi}
  A predictable process $u:[0,T]\times\Omega \to H$ is a mild solution
  of (\ref{eq:ee}) if it satisfies
  $$
  u(t) = e^{tA}u_0 + \int_0^t e^{(t-s)A}f(s,u(s))\,ds + \int_0^t
  e^{(t-s)A} B(s,u(s))\,dM(s)
  $$
  $\P$-a.s. for almost all $t\in[0,T]$.
\end{defi}
\begin{defi}
  We shall denote by $\mathcal{H}_p(T)$, $p\geq 2$, the space of all
  predictable processes $u:[0,T]\to H$ such that
  $$
  |[u]|_p^p := \sup_{t\in[0,T]} \E|u(t)|^p <\infty.
  $$
\end{defi}
\noindent We have the following local well-posedness result in $\H_2(T)$.
\begin{thm}\label{thm:1}
  Assume that
  \begin{enumerate}
  \renewcommand{\labelenumi}{{\rm (\roman{enumi})}}
  \item $\sigma^k \in C^{0,1,2}([0,T] \times \erre_+ \times \erre)$ for
  all $k\in\enne$;
  \item there exists $\beta \in L_{2,\alpha}(\erre_+,\ell_2)$ such that,
  for all $k\in\enne$,
  \begin{equation}
    \label{eq:pru}
  |\sigma^k_x(t,x,u) - \sigma^k_x(t,x,v)| \leq \beta^k(x)|u-v|
  \end{equation}
  for almost all (a.a.) $(t,x) \in [0,T] \times \erre_+$, $\P$-a.s.;
  \item there exists $\gamma=(\gamma^k)_{k\in\mathbb{N}} \in \ell_2$
such that
  \begin{equation}
    \label{eq:gna}
  |\sigma^k_u(t,x,u)| + |\sigma^k_{uu}(t,x,u)| \leq \gamma^k  
  \end{equation}
  for a.a. $(t,x,u) \in [0,T] \times \erre_+ \times \erre$, $\P$-a.s.;
\item there exists a constant $C$ such that
  $|\sigma(t,\cdot,0)|_{1,\alpha,K} < C$ for a.a. $t \in [0,T]$,
  $\P$-a.s.;
\item $\E|u_0|^2 < \infty$.
  \end{enumerate}
  Then (\ref{eq:musso}) has a unique local mild solution in $\H_2(T)$.
  Furthermore, the solution depends Lipschitz continuously on the
  initial datum.
\end{thm}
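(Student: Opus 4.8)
The plan is to cast \eqref{eq:ee} as a fixed-point problem on a suitable closed subset of $\H_2(T)$ and to apply a contraction-mapping argument to the truncated equation, exactly in the spirit of the abstract results announced for section \ref{sec:aux}. Since the nonlinearities $f$ and $B$ are only locally Lipschitz (indeed $g$ involves the derivative $D\psi$ composed with an integral of $\sigma$, which must stay in $B_r(K)$), I would first \emph{localize}: fix $R>0$, introduce the cut-off $\chi_R$ on $H$, and replace $\sigma(t,x,u(x))$ by $\sigma(t,x,(u\chi_R)(x))$ or, more precisely, multiply the coefficients by $\chi_R(|u|_H)$ so that the truncated coefficients $f_R$, $B_R$ become globally Lipschitz and bounded on $H$. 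One then solves the truncated SPDE globally in $\H_2(T)$ and recovers a local solution of the original equation by the usual stopping-time argument (the solution coincides with the truncated one up to the first exit time of the $H$-ball of radius $R$), with uniqueness and Lipschitz dependence on $u_0$ inherited from the truncated problem.

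The technical heart is to verify that, under hypotheses (i)--(iv), the truncated drift $f_R:[0,T]\times H\to H$ and the truncated diffusion $B_R:[0,T]\times H\to\L_2^Q(K,H)$ are well-defined (land in the right spaces) and Lipschitz continuous, uniformly in $t$. For $B$: from $[B(t,u)\phi](x)=\ip{\sigma(t,x,u(x))}{\phi}_K$ one computes $\partial_x[B(t,u)e_k](x)=\sigma^k_x(t,x,u(x))+\sigma^k_u(t,x,u(x))\,u'(x)$, so that $|B(t,u)Q^{1/2}|_2^2$ is controlled by $\int_0^\infty\sum_k(\lambda_k)(|\sigma^k_x(t,x,u(x))|^2+|\gamma^k|^2|u'(x)|^2)\alpha(x)\,dx$; hypotheses (ii), (iii), (iv) together with the pointwise bound \eqref{eq:pippo-inf} $|u|_\infty\lesssim|u|_H$ give $|B(t,u)|_{\L_2^Q}\lesssim 1+|u|_H$, and the Lipschitz estimate follows the same way using (ii)--(iii) and the mean value theorem to bound $|\sigma^k_x(t,x,u)-\sigma^k_x(t,x,v)|$, $|\sigma^k_u(t,x,u)-\sigma^k_u(t,x,v)|$ by $\beta^k(x)|u-v|$ and $\gamma^k$-multiples of differences. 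For $f=g(\sigma)$: here one needs estimates on $D\psi$ and $D^2\psi$ on the ball $B_r(K)$ — available from proposition \ref{prop:dui} together with the exponential integrability \eqref{eq:mah} — and then differentiates $g(\sigma)(t,x)=\ip{\sigma(t,x,u(x))}{D\psi(-\int_0^x\sigma(t,y,u(y))\,dy)}$ in $x$; the $x$-derivative produces terms with $\sigma_x$, $\sigma_u u'$, $D\psi$, and $D^2\psi$ acting on $-\sigma(t,x,u(x))$, all of which are square-integrable against $\alpha$ by the same combination of (i)--(iv) and \eqref{eq:pippo-inf}. This is the step I expect to be the main obstacle: the drift $f$ is a genuinely nonlocal, nonlinear functional of $u$ through the integral inside $D\psi$, so the Lipschitz estimate in $H$ requires carefully tracking how a perturbation of $u$ propagates through $\int_0^x\sigma(t,y,u(y))\,dy$ and then through $D\psi$ and $D^2\psi$; one uses $|\int_0^x(\sigma(t,y,u(y))-\sigma(t,y,v(y)))\,dy|_K\lesssim|u-v|_\infty\lesssim|u-v|_H$ (via (iii) and \eqref{eq:pippo-inf}) to close it.

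Once $f_R$ and $B_R$ are globally Lipschitz and of linear growth on $H$ uniformly in $t$, the fixed-point argument is standard: define $\Gamma u(t):=e^{tA}u_0+\int_0^t e^{(t-s)A}f_R(s,u(s))\,ds+\int_0^t e^{(t-s)A}B_R(s,u(s))\,dM(s)$ on $\H_2(T)$, use the contractivity $|e^{tA}|_{\L(H)}\le 1$ of the shift semigroup together with It\^o's isometry for the stochastic convolution — or, for the $\sup_t\E|\cdot|^2$ norm, a Burkholder-type maximal inequality of the kind proved in section \ref{sec:aux} — to obtain $|[\Gamma u-\Gamma v]|_2^2\lesssim (T+\text{const})\,|[u-v]|_2^2$, which is a contraction on a short interval $[0,T_0]$; then iterate to cover $[0,T]$. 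This yields a unique global solution $u_R\in\H_2(T)$ of the truncated equation. Finally, setting $\tau_R:=\inf\{t:|u_R(t)|_H\ge R\}$, the processes $u_R$ are consistent for increasing $R$, they solve the original \eqref{eq:musso} on $[0,\tau_R]$, and $\tau:=\lim_R\tau_R$ is the maximal existence time, giving a unique local mild solution in $\H_2(T)$; Lipschitz dependence on $u_0$ transfers from the corresponding (standard) estimate for the truncated problem.
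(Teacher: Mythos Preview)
Your proposal is correct and follows essentially the same route as the paper: verify that $B$ and $f=g(\sigma)$ are locally Lipschitz on $H$ (the paper packages the $f$-estimate via the intermediate space $\fH=L^1_{2,\alpha}(\erre_+,K)$, proving $|g(\sigma)-g(\rho)|_H\lesssim(1+R^2)|\sigma-\rho|_\fH$ and then $|\sigma(t,u)-\sigma(t,v)|_\fH\lesssim(1+R)|u-v|_H$, which is a cleaner modularization of exactly your computation), truncate, and apply the abstract contraction argument of theorem~\ref{thm:mild1}. One small inaccuracy: the shift semigroup is \emph{not} a contraction on $H$ with the norm $|\phi|_H^2=\phi(0)^2+\int|\phi'|^2\alpha$ (the boundary term $\phi(t)^2$ can exceed $\phi(0)^2$); it is only a contraction on the subspace $\{\phi(\infty)=0\}$ equipped with the equivalent norm $\phi(\infty)^2+\int|\phi'|^2\alpha$, but for theorem~\ref{thm:1} this is harmless since the $\H_2(T)$ argument only uses $\sup_{t\le T}|e^{tA}|<\infty$.
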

\begin{rmk}
  In (\ref{eq:gna}) one could replace the second term on the left hand
  side with $[\sigma_u^k(t,x,u)]_{\mathrm{Lip}}$, where
  $[\cdot]_{\mathrm{Lip}}$ stands for the Lipschitz norm with respect
  to the third variable. In other words, the same proof goes through if
  one replaces the assumption on boundedness of the second derivative
  of $\sigma^k$ with respect to $u$ with an assumption of Lipschitz
  continuity of $\sigma^k_u$ with uniform Lipschitz constant.
\end{rmk}

The proof of this theorem (which is given in section \ref{sec:proofs})
relies on a more general result on existence and uniqueness for
stochastic evolution equations with Lipschitz nonlinearities (see
theorem \ref{thm:mild1} below), combined with regularity results for
$f$.

The above theorem, as well as theorem \ref{thm:mild1} below, are
stated and proved for their simplicity, even though a more refined
result holds true. In fact, one can look for mild solutions of
(\ref{eq:ee}) in smaller spaces that are the infinite dimensional
version of the spaces usually considered in the theory of finite
dimensional SDEs (see e.g. \cite{Protter}).
\begin{defi}
  We shall denote by $\mathbb{H}_p(T)$, $p \geq 2$, the space of all
  predictable processes $u:[0,T]\to H$ such that
  $$
  \|u\|_p^p := \E\sup_{t\in[0,T]} |u(t)|^p < \infty.
  $$
\end{defi}
\noindent The following local well-posedness result in
$\mathbb{H}_p(T)$ is the main result of this paper.
\begin{thm}\label{thm:2}
  Let $p\geq 2$ and assume that hypotheses (i)--(iv) of theorem
  \ref{thm:1} hold. Moreover, assume that $\E|u_0|^p < \infty$ and
  $M\in\mathcal{M}_p$. Then (\ref{eq:musso}) has a unique local mild
  solution in $\mathbb{H}_p(T)$. Furthermore, the solution depends
  Lipschitz continuously on the initial datum.
\end{thm}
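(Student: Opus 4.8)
The plan is to deduce Theorem~\ref{thm:2} from a general existence and uniqueness result for abstract stochastic evolution equations in the spaces $\mathbb{H}_p(T)$ (the analogue of Theorem~\ref{thm:mild1} but with the stronger sup-inside-expectation norm), exactly as Theorem~\ref{thm:1} is deduced from Theorem~\ref{thm:mild1}. So the first step is to verify that, under hypotheses (i)--(iv), the coefficients $f(t,\cdot)$ and $B(t,\cdot)$ of the abstract equation~\eqref{eq:ee} map $H$ into $H$ and into $\L_2^Q(K,H)$ respectively, and are Lipschitz continuous on balls of $H$ with linear growth (i.e. locally Lipschitz, since only local solutions are claimed). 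For $B$ this is a computation using (ii), (iii), (iv) and the embeddings \eqref{eq:pippo-inf}--\eqref{eq:pippo-4} of $H$; for $f=g(\sigma)$ one needs the regularity of $D\psi$ from Proposition~\ref{prop:dui} together with the same assumptions on $\sigma$. These regularity estimates for $f$ and $B$ are presumably the content of lemmas proved in section~\ref{sec:proofs}, so I would simply invoke them. The role of the extra hypothesis $M\in\mathcal{M}_p$ (and $\E|u_0|^p<\infty$) is to make the $p$-th moment machinery available: it guarantees $\int_K|x|^q\,m(dx)<\infty$ for $q\in[2,p]$, which is what the Burkholder-type maximal inequality of section~\ref{sec:aux} requires to control $\E\sup_{t\le T}\big|\int_0^t e^{(t-s)A}\Phi(s)\,dM(s)\big|^p$.

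Next, I would set up the standard Banach fixed-point argument in $\mathbb{H}_p(\tau)$ for a suitably small $\tau\le T$ (local solution), possibly after truncating $\sigma$ outside a ball so that the coefficients become globally Lipschitz, solving the truncated equation, and then localizing by a stopping time to remove the truncation. Define the map
\[
\Gamma(u)(t) = e^{tA}u_0 + \int_0^t e^{(t-s)A}f(s,u(s))\,ds + \int_0^t e^{(t-s)A}B(s,u(s))\,dM(s).
\]
One checks $\Gamma$ maps $\mathbb{H}_p(\tau)$ into itself: the semigroup $e^{tA}$ (right shifts) is a contraction on $H$, the deterministic integral is handled by H\"older's inequality in time together with the Lipschitz/growth bound on $f$, and the stochastic integral by the maximal inequality from section~\ref{sec:aux} applied to $\Phi(s)=e^{(t-s)A}B(s,u(s))$, using that $|e^{(t-s)A}|_{\L(H)}\le 1$ and the bound on $|B(s,u(s))|_{\L_2^Q}$. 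The contraction estimate is the same computation applied to $\Gamma(u)-\Gamma(v)$, and the time integrals produce a factor that is $o(1)$ as $\tau\to0$ (e.g. a power of $\tau$ from the deterministic part and, for the stochastic part, the fact that the relevant integrals over $[0,\tau]$ vanish as $\tau\to0$ once the integrand has bounded moments). Choosing $\tau$ small makes $\Gamma$ a strict contraction, giving a unique fixed point in $\mathbb{H}_p(\tau)$; patching together maximal solutions and undoing the truncation via a localizing stopping time yields the unique local mild solution. Lipschitz dependence on $u_0$ follows from the same contraction estimate applied to two solutions with initial data $u_0,v_0$: $\|u-v\|_{\mathbb{H}_p(\tau)} \le \|e^{\cdot A}(u_0-v_0)\|_{\mathbb{H}_p(\tau)} + \theta\|u-v\|_{\mathbb{H}_p(\tau)}$ with $\theta<1$, and $\|e^{\cdot A}(u_0-v_0)\|_{\mathbb{H}_p(\tau)}^p = \E\sup_{t\le\tau}|e^{tA}(u_0-v_0)|^p \le \E|u_0-v_0|^p$.

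The main obstacle, and the reason $M\in\mathcal{M}_p$ is needed, is the $p$-th moment maximal inequality for the stochastic convolution $\int_0^t e^{(t-s)A}\Phi(s)\,dM(s)$ when $M$ has jumps: for $p>2$ one cannot simply use It\^o's isometry, and the presence of the semigroup inside the integral obstructs a direct application of the Burkholder--Davis--Gundy inequality (the convolution is not itself a martingale). This is precisely what the maximal inequalities of Burkholder type announced in section~\ref{sec:aux} are designed to handle; granting them, the bound reads, schematically,
\[
\E\sup_{t\le\tau}\Big|\int_0^t e^{(t-s)A}\Phi(s)\,dM(s)\Big|^p \lesssim_p \E\int_0^\tau |\Phi(s)|_{\L_2^Q}^p\,ds + \E\Big(\int_0^\tau |\Phi(s)|_{\L_2^Q}^2\,ds\Big)^{p/2},
\]
possibly with an extra term involving $\int_K|x|^p\,m(dx)$, and the $\mathcal{M}_p$ hypothesis is exactly the integrability that makes the constant finite. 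Everything else — the embeddings of $H$, the regularity of $f$ and $B$, the contraction mapping — is routine once these inequalities are in place, so I would organize the proof as: (1) cite the coefficient-regularity lemmas; (2) cite the abstract well-posedness theorem in $\mathbb{H}_p$ (which in turn rests on the section~\ref{sec:aux} maximal inequalities); (3) check that hypotheses (i)--(v of the present theorem imply the hypotheses of that abstract theorem; (4) conclude, reading off Lipschitz dependence from the contraction estimate.
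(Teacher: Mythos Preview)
Your overall architecture is the same as the paper's: invoke the coefficient regularity (Propositions~\ref{prop:locl1} and~\ref{prop:locl2}) to get local Lipschitz/growth bounds on $f$ and $B$, then apply the abstract well-posedness result in $\mathbb{H}_p$ (Theorem~\ref{thm:mild2}), whose engine is the maximal inequality~\eqref{eq:BDGconv} for stochastic convolutions, available precisely under $M\in\mathcal{M}_p$. The truncation/localization you sketch and the Lipschitz dependence via the contraction estimate are also in line with the paper.

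There is, however, a genuine gap. You assert that ``the semigroup $e^{tA}$ (right shifts) is a contraction on $H$'' and use this both for the deterministic bounds and, more crucially, to invoke the Burkholder-type inequality for the stochastic convolution. This is \emph{not} true for the norm $|\phi|_H^2=\phi(0)^2+\int_0^\infty|\phi'(x)|^2\alpha(x)\,dx$: the integral part indeed decreases under the shift (since $\alpha$ is increasing), but the boundary term becomes $\phi(t)^2$, which can exceed $\phi(0)^2$. The maximal inequality~\eqref{eq:BDGconv} is proved via Sz.-Nagy's dilation theorem and genuinely requires a contraction semigroup; without it the argument for controlling $\E\sup_{t\leq T}|\int_0^t e^{(t-s)A}B(s,u(s))\,dM(s)|^p$ breaks down.

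The paper's proof of Theorem~\ref{thm:2} is essentially devoted to fixing exactly this point: it replaces the $H$-norm by the equivalent norm $|\phi|_*^2=\phi(\infty)^2+\int_0^\infty|\phi'(x)|^2\alpha(x)\,dx$ and works on the subspace $\dot H_0=\{\phi:\phi(\infty)=0\}$, on which one computes $|e^{tA}\phi|_*^2=\int_t^\infty|\phi'(x)|^2\alpha(x)\,dx\le|\phi|_*^2$, so the shift semigroup \emph{is} a contraction there. Theorem~\ref{thm:mild2} is then applied in $(\dot H_0,|\cdot|_*)$, and the equivalence of norms transfers the solution back to $\mathbb{H}_p(T)$. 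You should insert this renorming step; otherwise the appeal to the maximal inequality is unjustified.
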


\section{Auxiliary results}\label{sec:aux}
Let us start proving a regularity result for the function $\psi$
defined in (\ref{eq:pissi}) above, under the assumption
(\ref{eq:mah}).
\begin{prop}    \label{prop:dui}
  One has $\psi \in C^2_b(B_r(K))$ and $D^2\psi$ is Lipschitz on
  $B_r(K)$.
\end{prop}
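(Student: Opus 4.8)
The plan is to differentiate under the integral sign in the Lévy–Khintchine representation of $\psi$ and then control the resulting integrals using the exponential integrability hypothesis (\ref{eq:mah}). Recall
\[
\psi(\zeta) = -\frac12 \ip{R\zeta}{\zeta}
+ \int_K \big(e^{\ip{\zeta}{x}} - 1 - \ip{\zeta}{x}\big)\,m(dx).
\]
The quadratic (Gaussian) part is a bounded quadratic form, hence smooth with constant second derivative, so it contributes a $C^\infty_b$ term whose $D^2$ is trivially Lipschitz; everything reduces to the integral term. For fixed $x$, the map $\zeta \mapsto e^{\ip{\zeta}{x}} - 1 - \ip{\zeta}{x}$ is smooth, with first Fréchet derivative $\big(e^{\ip{\zeta}{x}}-1\big)x$ and second derivative $e^{\ip{\zeta}{x}}\, x\otimes x$ (as an element of $\L(K,K)$, or equivalently the bilinear form $(h_1,h_2)\mapsto e^{\ip{\zeta}{x}}\ip{h_1}{x}\ip{h_2}{x}$). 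I would first justify differentiating under the integral twice on the open ball $B_r(K)$ by exhibiting integrable majorants, uniformly for $\zeta$ in a slightly smaller ball, and then separately estimate the near-origin part ($|x|\le 1$) using the elementary bounds $|e^{s}-1-s|\lesssim s^2 e^{|s|}$ and $|e^s-1|\lesssim |s|e^{|s|}$ together with $\int_{|x|\le 1}|x|^2\,m(dx)<\infty$, and the tail part ($|x|>1$) using (\ref{eq:mah}).

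The crucial input is that (\ref{eq:mah}) gives room: since it holds on $B_{\delta r}(K)$ with $\delta>1$, for $\zeta\in B_r(K)$ and $|h_i|\le 1$ one can write, for suitable $\theta\in(0,1]$ with $(1+\theta)r<\delta r$,
\[
e^{\ip{\zeta}{x}}\,|\ip{h_1}{x}|\,|\ip{h_2}{x}|
\;\lesssim\; e^{\ip{\zeta}{x}}\,|x|^2
\;\lesssim\; \theta^{-2}\, e^{(1+\theta)|x|\cdot r \,/\,r}\quad(\text{using } t^2 e^{at}\lesssim_{a,b} e^{bt},\ a<b),
\]
which is $m$-integrable on $\{|x|>1\}$ because $\int_K e^{|\ip{\zeta}{x}|}\,m(dx)<\infty$ for $\zeta\in B_{\delta r}(K)$ — a consequence of the finiteness of $\E e^{|\ip{\zeta}{M(1)}|}$ via the Lévy–Khintchine formula. (More carefully: $\E e^{|\ip{\zeta}{M(1)}|}<\infty$ forces $\int_{|x|>1} e^{|\ip{\zeta}{x}|}\,m(dx)<\infty$, the standard link between exponential moments of a Lévy process and of its Lévy measure.) This yields a fixed integrable dominating function independent of $\zeta\in B_r(K)$, legitimizing the differentiations and simultaneously proving boundedness of $\psi$, $D\psi$, $D^2\psi$ on $B_r(K)$; hence $\psi\in C^2_b(B_r(K))$.

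For the Lipschitz property of $D^2\psi$, I would estimate, for $\zeta_1,\zeta_2\in B_r(K)$,
\[
\big|D^2\psi(\zeta_1)-D^2\psi(\zeta_2)\big|
\;\le\; \int_K \big|e^{\ip{\zeta_1}{x}}-e^{\ip{\zeta_2}{x}}\big|\,|x|^2\,m(dx)
\;\le\; \int_K e^{\max_i\ip{\zeta_i}{x}}\,|\ip{\zeta_1-\zeta_2}{x}|\,|x|^2\,m(dx),
\]
using $|e^a-e^b|\le e^{\max(a,b)}|a-b|$, so the bound is $\le |\zeta_1-\zeta_2|\int_K e^{r|x|}|x|^3\,m(dx)$. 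The remaining point is finiteness of this last integral: on $|x|\le 1$ it is controlled by $\int_{|x|\le1}|x|^2\,m(dx)<\infty$, and on $|x|>1$ by the same trick $e^{r|x|}|x|^3\lesssim e^{(1+\theta)r|x|}\le e^{\delta r|x|/1}$ for small $\theta$, which is $m$-integrable by the argument above.

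The main obstacle I anticipate is purely the bookkeeping of passing from the scalar exponential-moment hypothesis (\ref{eq:mah}) on $\ip{\zeta}{M(1)}$ to integrability statements for $m$ against $e^{c|x|}|x|^j$ with the correct constants — i.e. making sure the slack $\delta>1$ is genuinely used to absorb the polynomial factors $|x|^j$ and the fact that $|\ip{\zeta}{x}|$ (absolute value) rather than $\ip{\zeta}{x}$ appears. Once one records the lemma ``$\E e^{|\ip{\zeta}{M(1)}|}<\infty$ for all $\zeta\in B_{\delta r}(K)$ implies $\int_{|x|>1} e^{|\ip{\zeta}{x}|}\,m(dx)<\infty$ for all such $\zeta$'', the rest is routine dominated-convergence and elementary-inequality manipulation.
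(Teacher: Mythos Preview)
Your overall strategy---differentiate the L\'evy--Khintchine integral and dominate---is the right one and matches the paper. The gap is in your domination step. You bound $e^{\langle\zeta,x\rangle}|x|^j$ by a \emph{radial} exponential $e^{c|x|}$ (with $c=(1+\theta)r<\delta r$) and then appeal to $\int_K e^{|\langle\zeta,x\rangle|}\,m(dx)<\infty$ for $\zeta\in B_{\delta r}(K)$. But these are not the same thing: in an infinite-dimensional Hilbert space, uniform finiteness of the \emph{directional} exponential integrals $\int_{|x|>1} e^{|\langle\zeta,x\rangle|}\,m(dx)$ over $\zeta\in B_{\delta r}$ does \emph{not} imply $\int_{|x|>1} e^{c|x|}\,m(dx)<\infty$ for any $c>0$. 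Indeed, the remark immediately following the paper's proof says precisely that the radial condition $\int e^{\delta r|x|}\,m(dx)<\infty$ is a \emph{simpler but less general} sufficient condition for (\ref{eq:mah}). So what you flag as ``purely bookkeeping'' is in fact the heart of the matter, and your argument as written only goes through under that stronger hypothesis.

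The paper avoids radial exponentials altogether. To dominate $|x|\,e^{|\langle\zeta,x\rangle|}$ (and analogously with higher powers of $|x|$), it applies H\"older's inequality with exponents $\delta$ and $\delta'=\delta/(\delta-1)$ to \emph{separate} the polynomial factor from the exponential:
\[
\int_K |x|\,e^{|\langle\zeta,x\rangle|}\,m(dx)
\;\le\; \Big(\int_K e^{|\langle\delta\zeta,x\rangle|}\,m(dx)\Big)^{1/\delta}
        \Big(\int_K |x|^{\delta'}\,m(dx)\Big)^{1/\delta'},
\]
so the exponential stays directional with argument $\delta\zeta\in B_{\delta r}(K)$, controlled directly by (\ref{eq:mah}). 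The slack $\delta>1$ is thus spent as a H\"older exponent, not to absorb polynomial growth into a larger radial exponential. For the Lipschitz continuity of $D^2\psi$ the paper does not estimate the difference directly as you propose; it simply notes that the same H\"older argument shows $D^3\psi$ is bounded on $B_r(K)$, which immediately yields the Lipschitz bound for $D^2\psi$. Your direct estimate of $D^2\psi(\zeta_1)-D^2\psi(\zeta_2)$ could be made to work too, but only after replacing the bound $e^{r|x|}|x|^3\lesssim e^{(1+\theta)r|x|}$ by the same H\"older separation.
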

\begin{proof}
We claim that the G\^ateaux derivative of $\psi$ at a point
$\zeta\in B_r(K)$ is given by
\begin{equation}     \label{eq:dpsi}
D_G\psi(\zeta): \phi \mapsto \ip{R\zeta}{\phi}
+ \int_K \ip{x}{\phi} (e^{\ip{\zeta}{x}}-1)\,m(dx).    
\end{equation}
In fact, for any $\zeta \in B_r(K)$ and $\xi\in K$, we have
\begin{eqnarray*}
\lim_{\varepsilon\to 0}
\frac{\psi(\zeta+\varepsilon\xi)-\psi(\zeta)}{\varepsilon} &=&
\ip{R\zeta}{\xi}
+ \lim_{\varepsilon\to 0} \int_K \big( e^{\ip{\zeta}{x}}
  \frac{e^{\varepsilon\ip{\xi}{x}}-1}{\varepsilon}
  + \ip{\xi}{x} \big)\,m(dx)\\
&=& \ip{R\zeta}{\xi}
    + \int_K \ip{\xi}{x} (e^{\ip{\zeta}{x}}-1)\,m(dx).    
\end{eqnarray*}
The passage to the limit under the integral sign in the above
expression is possible thanks to the dominated convergence theorem. In
fact, by the mean value theorem, one has
\[
e^{\ip{\zeta}{x}} \frac{e^{\varepsilon\ip{\xi}{x}}-1}{\varepsilon} 
= e^{\ip{\zeta}{x}} e^{\varepsilon\eta\ip{\xi}{x}} \ip{\xi}{x}
\]
for some $\eta\in(0,1)$, hence
\[
\Big| e^{\ip{\zeta}{x}} \frac{e^{\varepsilon\ip{\xi}{x}}-1}{\varepsilon}
\Big| \leq |\xi| \, |x| \, e^{|\ip{\zeta}{x}|} \,
             e^{\varepsilon|\ip{\xi}{x}|},
\]
and, setting $\delta'=\delta/(\delta-1)$ and appealing to H\"older's
inequality,
\begin{eqnarray*}
\lefteqn{
\int_K |x| \, e^{|\ip{\zeta}{x}|} \, e^{\varepsilon|\ip{\xi}{x}|}\,m(dx)}\\
&\leq&
\Big( \int_K e^{|\ip{\delta\zeta}{x}|} \,m(dx) \Big)^{\frac{1}{\delta}}
\Big( \int_K |x|^{2\delta'}\,m(dx) \Big)^{\frac{1}{2\delta'}}
\Big( \int_K e^{2\varepsilon\delta'|\ip{\xi}{x}|}\,m(dx) \Big)^{\frac{1}{2\delta'}}\\
&<& \infty,
\end{eqnarray*}
for all $\varepsilon < (2\delta'|\xi|)^{-1}\delta r$.

\noindent Let us now show that $D_G\psi$ is continuous from $K$ to
$\mathcal{L}(K,\erre)$. Let $\zeta_n \to \zeta$ as $n \to \infty$,
$h\in K$. Then
\begin{eqnarray*}
\sup_{|h|\leq 1} \big| D_G\psi(\zeta_n)h - D_G\psi(\zeta)h \big| &\leq&
\sup_{|h|\leq 1} \int_K |\ip{x}{h}| \,
    \big|e^{\ip{\zeta_n}{x}} - e^{\ip{\zeta}{x}}\big|\,m(dx)\\
&\leq& \int_K |x| \big|e^{\ip{\zeta_n}{x}} - e^{\ip{\zeta}{x}}\big|\,m(dx)\\
&\to& 0
\end{eqnarray*}
as $n \to \infty$ by the dominated convergence theorem, with
computations completely analogous to the above ones.
Since $D_G\psi$ is continuous, then $D\psi=D_G\psi$ by a
well-known criterion for Fr\'echet differentiability (see e.g.
\cite[thm. 1.9]{AmbPro}).

Proceeding in a completely similar way one can prove that $\psi$ is
twice Fr\'echet differentiable in $B_r(K)$, with
$$
D^2\psi(\zeta): (\phi,\eta) \mapsto \int_K \ip{x}{\phi} \ip{x}{\eta}
 e^{\ip{x}{\zeta}}\,m(dx).
$$
The boundedness of $D\psi$ on $B_r(K)$ can again be proved by applying 
H\"older's inequality to (\ref{eq:dpsi}).

In a completely analogous way one can prove that $D^2\psi$ as well as
$D^3\psi$ are bounded on $B_r(K)$, in particular $D^2\psi$ is
Lipschitz.
\end{proof}
\begin{rmk}
  Simpler but less general assumptions than (\ref{eq:mah}) may be
  given. In fact, (\ref{eq:mah}) is clearly satisfied if $\int
  e^{\delta r |x|}\,m(dx)<\infty$.
  Another sufficient
  condition for (\ref{eq:mah}) is that the map $\varphi: \zeta \mapsto
  \int_K e^{|\ip{x}{\zeta}|}\,m(dx)$ is continuous with respect to the
  weak topology of $K$. In fact, by Banach-Alaoglu's theorem, the ball
  $B_{\delta r}$ is weakly compact in $K$, hence the image of
  $B_{\delta r}$ under $\varphi$ is compact in the real line and
  $\varphi$ attains a (finite) maximum at some point $\zeta_0 \in
  B_{\delta r}$.
\end{rmk}

\medskip

Let us denote by $[M]$ and $\langle M \rangle$ the quadratic variation
and the Meyer process of $M$, respectively.  We shall need the
following elementary lemma, whose proof is included for completeness.
\begin{lemma}
\label{lem:cov}
There exists a (deterministic) operator $Q\in \L_1^+(K)$ such that
$\langle M\rangle(t) = t\,\tr(Q)$ for all $t\geq 0$.
\end{lemma}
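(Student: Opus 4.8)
The plan is to use the fact that $M$ is a square integrable martingale with stationary independent increments, so its Meyer (predictable quadratic variation) process $\langle M\rangle$ should be deterministic and linear in $t$. First I would recall that for a $K$-valued square integrable martingale, $\langle M\rangle$ is the unique predictable increasing (in the sense of the trace class order) process such that $|M(t)|^2 - \tr\langle M\rangle(t)$ is a martingale; equivalently, for every $h,g\in K$, $\ip{M(t)}{h}\ip{M(t)}{g} - \ip{\langle M\rangle(t)h}{g}$ is a real-valued martingale. The key point is that stationarity and independence of increments force $\E\ip{M(t)}{h}\ip{M(t)}{g}$ to be additive in $t$: writing $\phi_{h,g}(t) := \E\ip{M(t)}{h}\ip{M(t)}{g}$, independence of increments plus $\E M(s) = 0$ (which follows from $M(0)=0$ and stationary independent increments together with integrability — or is part of the martingale hypothesis) gives $\phi_{h,g}(t+s) = \phi_{h,g}(t) + \phi_{h,g}(s)$.

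Next I would observe that $t\mapsto \E|M(t)|^2$ is nondecreasing (by Jensen / the martingale property) and additive, hence of the form $ct$ for some $c\ge 0$; in particular $\phi_{h,g}$ is a bounded additive function of $t$ on $[0,T]$ and therefore linear, $\phi_{h,g}(t) = t\,\phi_{h,g}(1)$. Define the bilinear form $b(h,g) := \phi_{h,g}(1) = \E\ip{M(1)}{h}\ip{M(1)}{g}$ on $K$; it is symmetric, nonnegative, and bounded (by $\E|M(1)|^2 |h|\,|g|$), so there is a bounded self-adjoint nonnegative operator $Q$ on $K$ with $b(h,g) = \ip{Qh}{g}$. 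Summing over an orthonormal basis $(e_k)$ gives $\tr Q = \E|M(1)|^2 < \infty$, so $Q\in\L_1^+(K)$. It remains to identify $\langle M\rangle(t)$ with $tQ$: for fixed $h,g$, the process $\ip{M(t)}{h}\ip{M(t)}{g} - t\ip{Qh}{g}$ has independent increments and, by the computation above, constant (zero) expectation, hence is a martingale; by uniqueness of the Meyer process this forces $\langle M\rangle(t) = tQ$, and taking traces yields $\langle M\rangle(t) = t\,\tr(Q)$ in the scalar sense meant in the statement — more precisely $\tr\langle M\rangle(t) = t\,\tr Q$, with the operator identity $\langle M\rangle(t)=tQ$ being the natural reading.

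The main obstacle, such as it is, is purely bookkeeping: one must be careful that the increments $\ip{M(t+s)-M(t)}{h}$ are genuinely independent of $\mathcal{F}_t$ and identically distributed with those of $\ip{M(s)}{h}$, so that the cross term in expanding $\ip{M(t+s)}{h}\ip{M(t+s)}{g}$ vanishes — this is where $\E[M(t+s)-M(t)\mid\mathcal{F}_t]=0$ is used. One should also note that an additive function $t\mapsto\phi_{h,g}(t)$ on $[0,\infty)$ that is dominated in absolute value by the nondecreasing additive (hence linear) function $t\mapsto\E|M(t)|^2$ is automatically linear, which avoids invoking any measurability or continuity argument for Cauchy's functional equation. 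No deep machinery is needed beyond the defining property of $\langle M\rangle$ and the Riesz representation of bounded bilinear forms on a Hilbert space.
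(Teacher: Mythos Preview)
Your approach is essentially the paper's: define $Q$ by $\ip{Qh}{g}=\E\ip{M(1)}{h}\ip{M(1)}{g}$, use stationarity and independence of increments to get linearity in $t$, and invoke uniqueness of the Meyer process. The paper works directly with the scalar $|M(t)|^2$ rather than the operator-valued bracket, but the argument is the same.

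One point needs correction. The process $\ip{M(t)}{h}\ip{M(t)}{g}-t\ip{Qh}{g}$ does \emph{not} have independent increments (already $|M(t)|^2$ does not: its increment over $[s,t]$ contains the cross term $2\ip{M(t)-M(s)}{M(s)}$, which depends on $M(s)$). So ``independent increments plus constant expectation $\Rightarrow$ martingale'' is not available here. What you should do instead---and you have all the ingredients in your last paragraph---is compute the conditional expectation directly: for $s\leq t$, expanding and using that $M(t)-M(s)$ is independent of $\mathcal{F}_s$ with mean zero kills the cross terms and gives
\[
\E\big[\ip{M(t)}{h}\ip{M(t)}{g}-\ip{M(s)}{h}\ip{M(s)}{g}\,\big|\,\mathcal{F}_s\big]
=\E\big[\ip{M(t)-M(s)}{h}\ip{M(t)-M(s)}{g}\big]=(t-s)\ip{Qh}{g}.
\]
This is exactly the computation the paper performs (with $h=g=e_k$ summed over $k$), and with it your argument is complete.
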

\begin{proof}
Let $Q$ be the correlation operator of $M(1)$, i.e. the operator defined by
\[
\ip{Qx}{y} = \E\ip{M(1)}{x} \ip{M(1)}{y},
\qquad x,\; y \in K.
\]
Recalling that $\E|M(1)|^2<\infty$, it is immediate to prove that
$Q\in\mathcal{L}_1^+(K)$.
Since $M$ has homogeneous independent increments, we also have
\[
\E\big[|M(t)|^2 - |M(s)|^2\big|\mathcal{F}_s\big]
= \E\big[|M(t)-M(s)|^2\big]
= (t-s)\tr(Q),
\]
where for the second equality we have used the facts that $M(t)-M(s) =
M(t-s)$ in distribution, $\E|M(t)|^2=t\E|M(1)|^2$, and
$\E|M(1)|^2=\tr(Q)$, as follows by definition of $Q$.
\end{proof}

Let $E$ be a real separable Hilbert spaces, and $A:D(A)\subset E
\to E$ the generator of a strongly continuous semigroup $e^{tA}$ on
$E$.
We will extensively use the following isometric formula due to
M\'etivier and Pistone \cite{MP-Z}: let $Q$ be the correlation
operator of $M$, and $F:[0,T] \to \L_Q^2(K,E)$ a predictable process
such that
$$
\E\int_0^T |F(s)|_Q^2\,ds < \infty.
$$
Then we have
\begin{equation}
  \label{eq:isom}
\E \Big| \int_0^t F(s)\,dM(s) \Big|^2 =
\E \int_0^t |F(s)|^2_Q\,ds.
\end{equation}
\begin{rmk}
  In fact the isometric formula of \cite{MP-Z} states that for every
  square integrable martingale $M$ and $F$ as above such that
  $$
  \E \int_0^t |F(s)|^2_{Q_M(s)}\,d\langle M \rangle(s) < \infty \qquad
  \forall t \leq T,
  $$
  one has
  \begin{equation}\label{eq:iso-vera}
    \E \Big| \int_0^t F(s)\,dM(s) \Big|^2 =
    \E \int_0^t |F(s)|_{Q_M(s)}^2 \,d\langle M \rangle(s),
  \end{equation}
  where $Q_M:[0,T] \to \L_1^+(K)$ is a predictable process with
  $\tr(Q_M(s))=1$ a.s., for all $s\leq T$. If $M$ has independent
  increments, then one also has $Q_M(t) = Q/\tr(Q)$, hence
  (\ref{eq:isom}) follows immediately by (\ref{eq:iso-vera}) and
  lemma \ref{lem:cov}.
\end{rmk}

In this section we establish existence and uniqueness of mild
solutions of equations of the form (\ref{eq:ee}) on a general Hilbert
space $E$, assuming only $A$ generates a strongly continuous
semigroup (resp. strongly continuous contraction semigroup) and the
nonlinear terms $f$ and $B$ are maps satisfying a Lipschitz
assumptions.

In the following theorem we shall denote by
$\Psi:L_2(\Omega,\mathcal{F},\P;H) \to \H_2(T)$ the solution map of
(\ref{eq:ee}), that is, $\Psi(\eta)$ stands for the solution of
(\ref{eq:ee}) with initial condition $\eta$. To simplify notation, we
shall set $\H_p=L_p(\Omega,\mathcal{F},\P;H)$ for $p\geq 2$.
\begin{thm}
\label{thm:mild1}
Assume that $\E|u_0|^2<\infty$, $e^{sA}B(t,x) \in
\mathcal{L}^Q_2(K,E)$ for all $(t,x)\in[0,T]\times E$, and there exists $h\in
L_{2,loc}(\erre)$ such that, for all $s$, $t \in [0,T]$,
  \begin{equation}
    \label{eq:i}
    |e^{sA}f(t,x)| + |e^{sA}B(t,x)|_Q \leq h(s)(1+|x|), \quad
  \end{equation}
  \begin{equation}
    \label{eq:ii}
    |e^{sA}(f(t,x) - f(t,y))| + |e^{sA}(B(t,x) - B(t,y))|_Q \leq h(s)|x-y|.
  \end{equation}
  for all $x$, $y \in E$, $\P$-a.s..  Then equation (\ref{eq:ee})
  admits a unique mild solution in $\mathcal{H}_2(T)$. Moreover, there
  exists a constant $N$, independent of $u_0$, $v_0$, such that
  \begin{equation}
    \label{eq:leone}
    |[\Psi(u_0)-\Psi(v_0)]|_2 \leq N|u_0-v_0|_{\H_2}
  \end{equation}
  for all $u_0$, $v_0 \in \H_2$.
\end{thm}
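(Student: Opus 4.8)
The plan is to prove Theorem \ref{thm:mild1} by the standard Banach fixed point argument applied to the integral (mild) formulation, but carried out in the space $\mathcal{H}_2(T)$ (or rather on a small enough time interval, then extended). Define, for a predictable $u \in \mathcal{H}_2(T)$ and initial datum $\eta \in \H_2$, the map
\[
(\mathcal{K}u)(t) = e^{tA}\eta + \int_0^t e^{(t-s)A}f(s,u(s))\,ds + \int_0^t e^{(t-s)A}B(s,u(s))\,dM(s).
\]
First I would check that $\mathcal{K}$ maps $\mathcal{H}_2(T)$ into itself: the deterministic part is controlled by strong continuity of the semigroup and $\E|\eta|^2<\infty$; the drift term is estimated by Jensen/Cauchy--Schwarz in the $ds$ integral together with (\ref{eq:i}), giving a bound in terms of $\big(\int_0^t h(s)\,ds\big)\int_0^t h(s)(1+\E|u(s)|^2)\,ds$, which is finite since $h\in L_{2,loc}$; the stochastic term is handled by the isometry (\ref{eq:isom}) applied to the process $s\mapsto e^{(t-s)A}B(s,u(s))$, which lies in $\L_2^Q(K,E)$ by hypothesis, yielding $\E\big|\int_0^t e^{(t-s)A}B(s,u(s))\,dM(s)\big|^2 = \E\int_0^t |e^{(t-s)A}B(s,u(s))|_Q^2\,ds \lesssim \int_0^t h(s)^2(1+\E|u(s)|^2)\,ds$. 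Taking $\sup_{t\le T}$ shows $\mathcal{K}u\in\mathcal{H}_2(T)$; note one must also argue measurability/predictability of $\mathcal{K}u$, which is routine given predictability of the integrands and stochastic Fubini-type arguments.

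Next I would establish the contraction estimate. For $u,v\in\mathcal{H}_2(T)$,
\[
\E|(\mathcal{K}u)(t)-(\mathcal{K}v)(t)|^2 \lesssim \E\Big|\int_0^t e^{(t-s)A}(f(s,u(s))-f(s,v(s)))\,ds\Big|^2 + \E\Big|\int_0^t e^{(t-s)A}(B(s,u(s))-B(s,v(s)))\,dM(s)\Big|^2,
\]
and using (\ref{eq:ii}), Cauchy--Schwarz on the first term and the isometry (\ref{eq:isom}) on the second, one gets
\[
\E|(\mathcal{K}u)(t)-(\mathcal{K}v)(t)|^2 \lesssim \Big(\int_0^t h(s)^2\,ds\Big)\int_0^t \E|u(s)-v(s)|^2\,ds \le \Big(\int_0^T h(s)^2\,ds\Big)^2 |[u-v]|_2^2.
\]
Hence $|[\mathcal{K}u-\mathcal{K}v]|_2^2 \le C(T) |[u-v]|_2^2$ with $C(T)=\big(\int_0^T h^2\big)^2\to 0$ as $T\to 0$, so on a sufficiently small interval $[0,T_0]$ the map $\mathcal{K}$ is a strict contraction on $\mathcal{H}_2(T_0)$ and Banach's theorem gives a unique fixed point, i.e.\ a unique mild solution on $[0,T_0]$. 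To reach the full interval $[0,T]$, I would iterate: the solution at time $T_0$ is again in $\H_2$ (with an $\E|\cdot|^2$ bound one can track via a Gronwall argument on the a priori estimate), so the construction can be restarted, and since $T_0$ depends only on $h$ and not on the initial datum, finitely many steps cover $[0,T]$; concatenating the pieces gives global existence and uniqueness in $\mathcal{H}_2(T)$.

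Finally, the Lipschitz dependence (\ref{eq:leone}) follows from essentially the same computation: writing $u=\Psi(u_0)$, $v=\Psi(v_0)$ and subtracting the two mild equations, the $e^{tA}(u_0-v_0)$ term contributes $\lesssim |u_0-v_0|_{\H_2}$ and the remaining two terms are controlled as above by $C(T)|[u-v]|_2$; on a short interval this yields $|[u-v]|_2 \le |u_0-v_0|_{\H_2} + \tfrac12|[u-v]|_2$, hence $|[u-v]|_2 \le 2|u_0-v_0|_{\H_2}$, and concatenating over the finitely many subintervals gives (\ref{eq:leone}) with $N$ depending only on $T$ and $h$. The main obstacle, such as it is, is not any single estimate but the bookkeeping needed to make the fixed-point argument run cleanly in this setting: ensuring predictability of $\mathcal{K}u$ and justifying the use of the M\'etivier--Pistone isometry (\ref{eq:isom}) for the convolution integrand $s\mapsto e^{(t-s)A}B(s,u(s))$ (one should check it is square-integrable in the $\L_2^Q$ sense, using (\ref{eq:i}) and $h\in L_{2,loc}$), and then handling the extension to $[0,T]$ with constants uniform in the initial data. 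No maximal inequality is needed here since we only control $\sup_t \E|u(t)|^2$ rather than $\E\sup_t|u(t)|^2$ — that refinement is what Theorem \ref{thm:2} will require.
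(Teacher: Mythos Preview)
Your approach is essentially identical to the paper's: Banach fixed point in $\mathcal{H}_2(T)$ using Cauchy--Schwarz on the drift and the M\'etivier--Pistone isometry (\ref{eq:isom}) on the stochastic term, contraction on a short interval (the paper even argues predictability via mean-square continuity of the stochastic convolution, which is what your ``routine'' comment would unpack to), then concatenation and the same absorption argument for (\ref{eq:leone}). One minor slip: in your contraction estimate the passage $\int_0^t \E|u(s)-v(s)|^2\,ds \le \big(\int_0^T h^2\big)\,|[u-v]|_2^2$ is not right---the correct bound is $\le T\,|[u-v]|_2^2$, giving $C(T)\lesssim (T+1)\int_0^T h^2$ as in the paper---but since either constant tends to $0$ as $T\to 0$ the argument is unaffected.
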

\begin{proof}
  The proof follows the same reasoning used for equations with Wiener
  noise. Some redundant details are therefore omitted. We shall prove
  that the mapping $\F: \H_2(T) \to \H_2(T)$ defined by
\begin{equation}
\label{eq:fi}
\F u(t) =  e^{tA} u_0 
+ \int_0^t e^{(t-s)A} f(s,u(s))\,ds
+ \int_0^t e^{(t-s)A} B(s,u(s))\,dM(s)
\end{equation}
is well defined and is a contraction, after which the result will
follow easily by Banach's fixed point theorem.  Let us first prove
that the image of $\F$ is in fact contained in $\H_2(T)$: to
this purpose, we have to show that, for any $u\in \H_2(T)$,
$\F u$ admits a predictable modification and that
$|[\F u]|_2<\infty$. Predictability of $\F u$ follows
by the mean-square continuity of the stochastic convolution term in
(\ref{eq:fi}): in fact, setting $M_A(t)=\int_0^t e^{(t-s)A}
B(s,u(s))\,dM(s)$, a simple calculation shows that, for $0\leq s\leq
t \leq T$,
\begin{eqnarray*}
\E|M_A(t)-M_A(s)|^2 &\lesssim&
\E \Big| \int_0^s (e^{(t-r)A}-e^{(s-r)A})B(r,u(r))\,dM(r) \Big|^2 \\
&& + \E \Big| \int_s^t e^{(t-r)A}B(r,u(r))\,dM(r) \Big|^2\\
&\leq& \E \int_0^s |e^{(t-r)A}-e^{(s-r)A}|^2|B(r,u(r))|_Q^2\,dr \\
&& + \E \int_s^t |e^{(t-r)A}|^2 |B(r,u(r))|_Q^2\,dr,
\end{eqnarray*}
where the second inequality follows by the isometric formula for
stochastic integrals. Therefore $\E|M_A(t)-M_A(s)|^2 \to 0$ as $s\to
t$.

\noindent Moreover, we have
\begin{eqnarray*}
|[\F u]|^2_2 &\lesssim& \sup_{t\leq T} \E|e^{tA} u_0|^2
+ \sup_{t\leq T} \E\Big| \int_0^t e^{(t-s)A} f(s,u(s))\,ds \Big|^2\\
&& + \sup_{t\leq T} \E\Big| \int_0^t e^{(t-s)A} B(s,u(s))\,dM(s) \Big|^2,
\end{eqnarray*}
and $\sup_{t\leq T} \E|e^{tA}u_0|^2 \leq \sup_{t\leq T} |e^{tA}|^2
\E|u_0|^2<\infty$. Cauchy-Schwarz' inequality and
(\ref{eq:i}) yield
\begin{eqnarray*}
\E\left|\int_0^t e^{(t-s)A}f(s,u(s))\,ds\right|^2 &\leq&
2T \E\int_0^t h^2(t-s)(1+|u(s)|^2)\,ds,
\end{eqnarray*}
hence
$$
\sup_{t\leq T} \E\left|\int_0^t e^{(t-s)A}f(s,u(s))\,ds\right|^2 
\leq N \sup_{t\leq T} \E(1+|u(t)|^2) < \infty,
$$
where $N=2T\int_0^T h^2(s)\,ds$.

Similarly, using the isometric formula (\ref{eq:isom}), lemma
\ref{lem:cov} and (\ref{eq:i}), we have
\begin{eqnarray*}
\E\left| \int_0^t e^{(t-s)A}B(s,u(s))\,dM(s) \right|^2 &=&
\E\int_0^t \big|e^{(t-s)A}B(s,u(s))Q^{1/2}\big|^2_2\,ds \\
&\leq& \E\int_0^t h(t-s)^2 (1+|u(s)|)^2\,ds.
\end{eqnarray*}
Then
\begin{eqnarray*}
\sup_{t\leq T} \E\left| \int_0^t e^{(t-s)A}B(s,u(s))\,dM(s) \right|^2 &\leq&
N \sup_{t\leq T} \E(1+|u(t)|^2),
\end{eqnarray*}
with $N=2\int_0^T h^2(s)ds$.
Therefore we have
$$
|[\F u]|_2^2 = \sup_{t\leq T} \E|\F u(t)|^2 \lesssim
1 + \sup_{t\leq T} \E|u(t)|^2 < \infty,
$$
which proves that $\F u \in \H_2(T)$.

Completely analogous calculations involving (\ref{eq:ii}) instead of
(\ref{eq:i}) show that
$$
|[\F u-\F v]|^2_2
\leq N|[u-v]|^2_2,
$$
with
$$
N = N(T,h) = 2(T+1)\int_0^T h^2(s)\,ds.
$$
Since $h\in L_{2,loc}(\erre)$, one can find $T_0>0$ so that $N(T_0,h)<1$,
thus one obtains, by Banach's fixed point theorem, existence and
uniqueness of a mild solution to (\ref{eq:ee}) on the time interval
$[0,T_0]$. Then one can proceed with classical extension arguments,
proving that a global solution on the time interval $[0,T]$ exists and
is unique.

In order to prove (\ref{eq:leone}), it is convenient to regard the map
$\mathfrak{F}$ as a function from $\H_2 \times \H_2(T)$ to $\H_2(T)$.
Let us assume, without loss of generality, that $T$ is such that
$|[\F(u_0,u)-\F(u_0,v)]|_2 \leq N_1 |[u-v]|_2$ with $N_1<1$. Then the
above fixed-point argument implies that the solution map
$\Psi:\H_2\to\H_2(T)$ is such that $\F(u_0,\Psi(u_0))=\Psi(u_0)$.
Moreover, by definition of $\F$, we have that
$$
|[\F(u_0,w) - \F(v_0,w)]|_2 \leq N_2 |u_0-v_0|_\H \qquad
\forall w \in \H_2(T),
$$
with $N_2=\big(\sup_{t\leq T} |e^{tA}|\big)^{1/2}$.
Therefore we can write
\begin{eqnarray*}
|[\Psi(u_0)-\Psi(v_0)]|_2 &=& |[\F(u_0,\Psi(u_0)) - \F(u_0,\Psi(u_0))]|_2\\
&\leq& |[\F(u_0,\Psi(u_0)) - \F(u_0,\Psi(v_0))]|_2 \\
&&     + |[\F(u_0,\Psi(v_0)) - \F(v_0,\Psi(v_0))]|_2\\
&\leq& N_1 |[\Psi(u_0)-\Psi(v_0)]|_2 + N_2 |u_0-v_0|_{\H_2},
\end{eqnarray*}
hence $|[\Psi(u_0)-\Psi(v_0)]|_2 \leq (1-N_1)^{-1}N_2 |u_0-v_0|_{\H_2}$.
\end{proof}

\begin{rmk}
  In order to obtain existence and uniqueness in $\H_p(T)$, $p>2$, one
  would need an estimate for a term of the type
  $$
  \sup_{t\leq T} \E \Big| \int_0^t e^{(t-s)A}F(s)\,dM(s) \Big|^p
  $$
  with $F$ a predictable function, whereas for the proof in $\H_2(T)$
  the isometric property of the stochastic integral is enough. An
  estimate of Burkholder type in $\H_p(T)$, $p>2$, for stochastic
  convolutions with respect to compensated Poisson measures is
  announced in \cite{Knoche-CRAS}.
  The maximal inequality (\ref{eq:BDGconv}) seems to
  be simpler and more natural, even though it holds only for
  pseudo-contraction semigroups $e^{tA}$.
\end{rmk}

The main result of this section is the following theorem, where the
solution map $\Psi$ is now defined from
$\H_p=L_p(\Omega,\mathcal{F},\P;H)$ to $\mathbb{H}_p(T)$.
\begin{thm}
\label{thm:mild2}
Let $p \geq 2$. Assume that $e^{tA}$ is a contraction semigroup,
$\E|u_0|^p<\infty$, $M\in\mathcal{M}_p$, and there exists $h\in
L_{p,loc}(\erre)$ such that
\begin{equation}\label{eq:i'}
  |e^{sA}f(t,x)| + |B(s,x)| \leq h(s)(1+|x|),
\end{equation}
\begin{equation}\label{eq:ii'}
  |e^{sA}(f(t,x) - f(t,y))| + |B(s,x) - B(s,y)| \leq h(s)|x-y|.
\end{equation}
for all $x$, $y \in E$. Then equation (\ref{eq:ee}) admits a unique
mild solution in $\mathbb{H}_p(T)$. Moreover, there exists a constant
$N$, independent of $u_0$, $v_0$, such that
\begin{equation}
  \label{eq:fuoco}
  \|\Psi(u_0)-\Psi(v_0)\|_p \leq N|u_0-v_0|_{\H_p}
\end{equation}
for all $u_0$, $v_0 \in \H_p$.
\end{thm}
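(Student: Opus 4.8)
The plan is to run the same Banach fixed-point scheme as in Theorem \ref{thm:mild1}, but now in the stronger norm $\|\cdot\|_p$ of $\mathbb{H}_p(T)$, the key new ingredient being a Burkholder--Davis--Gundy type maximal inequality for the stochastic convolution $\int_0^t e^{(t-s)A}F(s)\,dM(s)$ valid for contraction semigroups and exponents $p\geq 2$. Concretely, I would first establish (or invoke, if proved earlier in the paper — the remark alludes to an inequality \eqref{eq:BDGconv}) an estimate of the form
\[
\E\sup_{t\leq T}\Big|\int_0^t e^{(t-s)A}F(s)\,dM(s)\Big|^p
\lesssim_p \E\Big(\int_0^T |F(s)|_Q^2\,ds\Big)^{p/2}
+ \E\int_0^T |F(s)|_{L_p(m)}^p\,ds,
\]
where the second term reflects the jump part of $M$ and is finite precisely because $M\in\mathcal{M}_p$ guarantees $\int_K|x|^q\,m(dx)<\infty$ for $q\in[2,p]$; the contraction property of $e^{tA}$ is what lets one pass from a maximal inequality for plain stochastic integrals to one for stochastic convolutions (e.g. via the factorization method or a direct dilation/monotonicity argument). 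This is the step I expect to be the main obstacle: everything else is a routine adaptation, but getting a clean convolution-type BDG inequality in the $L_p$-in-probability setting for discontinuous $M$ is the technical heart.

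Granting that inequality, I would define $\F:\mathbb{H}_p(T)\to\mathbb{H}_p(T)$ exactly as in \eqref{eq:fi} and show it is a well-defined contraction for small $T$. For well-definedness one estimates the three terms of $\F u$ separately. The initial-datum term is controlled by $\E\sup_{t\leq T}|e^{tA}u_0|^p\leq \E|u_0|^p<\infty$ since $e^{tA}$ is a contraction. The drift term is handled by Jensen/Hölder in the time integral together with \eqref{eq:i'}:
\[
\E\sup_{t\leq T}\Big|\int_0^t e^{(t-s)A}f(s,u(s))\,ds\Big|^p
\lesssim_{p,T} \E\int_0^T h(s)^p(1+|u(s)|)^p\,ds
\lesssim 1+\|u\|_p^p,
\]
using $h\in L_{p,loc}$. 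The stochastic convolution term is bounded using the maximal inequality above, \eqref{eq:i'}, and $M\in\mathcal{M}_p$, again by something of the form $N(T,h,m)\,(1+\|u\|_p^p)$. Adding these gives $\F u\in\mathbb{H}_p(T)$; predictability of $\F u$ follows from the mean-$p$-th-power continuity of each term, as in Theorem \ref{thm:mild1}.

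For the contraction property I repeat the same three estimates with \eqref{eq:ii'} in place of \eqref{eq:i'}, obtaining
\[
\|\F u-\F v\|_p^p \leq N(T,h,p,m)\,\|u-v\|_p^p,
\]
where the constant $N(T,h,p,m)\to 0$ as $T\to 0$ (each contribution carries a positive power of $T$, or at least a factor $\int_0^T h^p\to 0$). Choosing $T_0$ with $N(T_0,\cdot)<1$, Banach's fixed-point theorem gives a unique mild solution on $[0,T_0]$, and the usual step-by-step extension argument (restarting from the value at $T_0$, which lies in $\H_p$) produces the unique solution on all of $[0,T]$. Finally, for the Lipschitz dependence \eqref{eq:fuoco}, I would regard $\F$ as a map $\H_p\times\mathbb{H}_p(T)\to\mathbb{H}_p(T)$, note that $\|\F(u_0,w)-\F(v_0,w)\|_p\leq N_2|u_0-v_0|_{\H_p}$ with $N_2=\sup_{t\leq T}|e^{tA}|\leq 1$ (contraction again), combine this with the contraction constant $N_1<1$ in the second argument via the triangle inequality exactly as in the proof of Theorem \ref{thm:mild1}, and conclude $\|\Psi(u_0)-\Psi(v_0)\|_p\leq(1-N_1)^{-1}N_2|u_0-v_0|_{\H_p}$, chaining over the finitely many subintervals used in the extension.
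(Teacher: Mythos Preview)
Your proposal is correct and matches the paper's approach: the paper also reruns the fixed-point argument of Theorem~\ref{thm:mild1} in $\mathbb{H}_p(T)$, with the only new ingredient being the maximal inequality \eqref{eq:BDGconv} for the stochastic convolution, which is indeed obtained via a Sz.-Nagy dilation argument (one of the options you mention) combined with a Bichteler--Jacod inequality relying on $M\in\mathcal{M}_p$. The estimates for the initial datum, the drift, the contraction constant, and the Lipschitz dependence \eqref{eq:fuoco} are carried out exactly as you outline.
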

Note that in order to obtain a solution in $\mathbb{H}_p(T)$, which is
a subset of $\mathcal{H}_p(T)$, one has to assume that $A$ generates a
contraction semigroup, which was not assumed in theorem
\ref{thm:mild1}.

In order to prove theorem \ref{thm:mild2} we need to establish a
maximal inequality of Burkholder type for stochastic convolutions,
which may be of independent interest. For related estimates, which
could probably be used in this context as well, see also \cite{HZ,
Ichi, Kote-sub, Kote-Doob}.

Let us first recall the Burkholder inequality for Hilbert space valued
martingales, see e.g. \cite[{\S}6.E.3]{Met}.
\begin{prop}
  Let $X$ be a $K$-valued local martingale with $X(0)=0$. For every
  $p\in [2,\infty)$ there exists a constant $C(p)$ depending on $p$
  only such that
  \begin{equation}
    \label{eq:BDG}
    \E \sup_{t\leq T} |X(t)|^p \leq C(p) \E [X](T)^{p/2}.
  \end{equation}
\end{prop}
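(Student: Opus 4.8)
The plan is to reduce to a bounded martingale, dispatch $p=2$ with Doob's inequality, and bootstrap to $p>2$ through an It\^o-type energy estimate. If $\E[X](T)^{p/2}=\infty$ there is nothing to prove, so assume it is finite; since $\P$ is a probability measure and $p/2\ge1$, Jensen's inequality gives $\E[X](T)<\infty$ as well. Pick stopping times $\tau_n\uparrow\infty$ reducing $X$ to bounded martingales $X^{\tau_n}$. If the asserted inequality holds for each $X^{\tau_n}$ with a constant $C(p)$ not depending on $n$, then, since $\sup_{t\le T}|X^{\tau_n}(t)|\uparrow\sup_{t\le T}|X(t)|$ and $[X^{\tau_n}](T)=[X](T\wedge\tau_n)\le[X](T)$, monotone convergence yields it for $X$. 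Hence we may assume $X$ is a bounded martingale, so $\E|X(T)|^p<\infty$ and $|X|$ is a nonnegative submartingale. For $p=2$, the process $|X(t)|^2-[X](t)$ is a mean-zero martingale, whence $\E|X(T)|^2=\E[X](T)$, and Doob's $L^2$ inequality for $|X|$ gives $\E\sup_{t\le T}|X(t)|^2\le4\,\E[X](T)$.

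For $p>2$, the map $x\mapsto|x|_K^p$ is $C^2$ on $K$ (with $D^2$ vanishing at the origin like $|x|^{p-2}$) and $\|D^2(|\cdot|^p)(x)\|\le p(p-1)|x|^{p-2}$. Apply It\^o's formula (for Hilbert-valued semimartingales with jumps, \cite{Met}) to $|X(t)|^p$ and take expectations: the first-order stochastic-integral term is a true mean-zero martingale because $X$ is bounded and $\E[X](T)<\infty$; the continuous second-order term is bounded by $\tfrac{p(p-1)}{2}\sup_{s\le T}|X(s)|^{p-2}[X](T)$ (using $\tr\langle X^c\rangle(T)\le[X](T)$); and the jump corrections, via $\big||a+h|^p-|a|^p-p|a|^{p-2}\langle a,h\rangle\big|\le C(p)\big(|a|^{p-2}|h|^2+|h|^p\big)$, are bounded by $C(p)\big(\sup_{s\le T}|X(s)|^{p-2}\sum_{s\le T}|\Delta X(s)|^2+\sum_{s\le T}|\Delta X(s)|^p\big)$, which is at most $C(p)\big(\sup_{s\le T}|X(s)|^{p-2}[X](T)+[X](T)^{p/2}\big)$ since $\sum|\Delta X|^2\le[X](T)$ and, $p/2\ge1$, $\sum|\Delta X|^p\le[X](T)^{p/2}$. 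Therefore
\begin{equation*}
\E|X(T)|^p\le C(p)\,\E\Big[\sup_{s\le T}|X(s)|^{p-2}[X](T)\Big]+C(p)\,\E[X](T)^{p/2}.
\end{equation*}
H\"older's inequality with exponents $p/(p-2)$ and $p/2$ bounds the first expectation by $\big(\E\sup_{s\le T}|X(s)|^p\big)^{(p-2)/p}\big(\E[X](T)^{p/2}\big)^{2/p}$, and Doob's $L^p$ inequality gives $\E\sup_{s\le T}|X(s)|^p\le(p/(p-1))^p\,\E|X(T)|^p$. Writing $I:=\E|X(T)|^p<\infty$ and $J:=\E[X](T)^{p/2}$, this becomes $I\le C(p)\,I^{(p-2)/p}J^{2/p}+C(p)\,J$; Young's inequality absorbs $\tfrac12 I$ into the left-hand side, so $I\lesssim_p J$, and then $\E\sup_{s\le T}|X(s)|^p\le(p/(p-1))^pI\lesssim_p J$, which is the claim.

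The main obstacle is the careful handling of It\^o's formula and of the stopping/integrability bookkeeping in the discontinuous, infinite-dimensional setting: one must check that $x\mapsto|x|^p$ really is $C^2$ with the stated derivative bound (including near the origin) and, crucially, that after the reduction to a bounded martingale the first-order stochastic integral arising in It\^o's formula has zero expectation, so that the energy estimate above is exact rather than merely an inequality in the wrong direction. The remaining ingredients — Doob's maximal inequality, H\"older, Young, and monotone convergence — are routine.
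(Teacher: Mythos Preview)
The paper does not prove this proposition at all: it is simply quoted as the known Burkholder inequality for Hilbert-space-valued local martingales, with a reference to M\'etivier \cite[\S 6.E.3]{Met}. There is therefore no proof in the paper to compare your attempt against.

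Your argument follows the standard energy-estimate route and is correct in substance. One point of wording deserves tightening: for a c\`adl\`ag local martingale with jumps you cannot in general localize to \emph{bounded} martingales, since stopping at $\tau_n=\inf\{t:|X(t)|>n\}$ still permits $|X(\tau_n)|$ to overshoot $n$ by a jump. What does hold, and is all you actually use, is that $|X^{\tau_n}(s-)|\le n$ for every $s\le T$, so the integrand $p|X(s-)|^{p-2}X(s-)$ in the first-order term of It\^o's formula is bounded by $pn^{p-1}$; combined with $\E[X^{\tau_n}](T)\le\E[X](T)<\infty$ this makes that stochastic integral a genuine square-integrable martingale with zero mean. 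The It\^o identity itself then yields $\E|X^{\tau_n}(T)|^p\le C(p)\big(n^{p-2}\E[X](T)+\E[X](T)^{p/2}\big)<\infty$, so Doob's $L^p$ inequality is legitimately available, and your H\"older--Young closure gives $\E\sup_{t\le T}|X^{\tau_n}(t)|^p\le C(p)\,\E[X^{\tau_n}](T)^{p/2}\le C(p)\,\E[X](T)^{p/2}$ with $C(p)$ independent of $n$; monotone convergence finishes. With this small clarification in place of ``bounded martingale'' your proof is complete.
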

For $F$ predictable and locally bounded, setting $X(t) = \int_0^t
F(s)\,dM(s)$, (\ref{eq:BDG}) implies
\begin{equation}     \label{eq:bagigio}
\E \sup_{t\leq T} \Big| \int_0^t F(s)\,dM(s) \Big|^p
\leq C(p) \E \Big( \int_0^T |F(s)|^2 \,d[M](s) \Big)^{p/2}.
\end{equation}
This estimate allows to obtain a maximal inequality of Bichteler-Jacod
type in Hilbert space, following very closely the proof of lemma 4.1
in \cite{ProTal-Euler}. That is, we have the following lemma, of which
we include a proof for the reader's convenience.
\begin{lemma}
  Let $p\geq 2$. Let $Y$ be a martingale L\'evy process with $Y(0)=0$
  and $\int_K |x|^q\,m(dx)<\infty$ for all $q\in [2,p]$, where $m$ is
  the L\'evy measure of $Y$.  Let $F$ be a predictable locally bounded
  process such that $\E\int_0^T |F(s)|^p\,ds<\infty$. Then there
  exists a constant $N$ which depends only on $p$ and $T$ such that
  \begin{multline}
    \label{eq:BJ}
    \E\sup_{t\leq T} \Big| \int_0^t F(s)\,dY(s) \Big|^p \\ \leq N
    \bigg[|R|_1^{p/2} + \int_K |x|^p\,m(dx) + \Big(\int_K |x|^2\,m(dx) \Big)^{p/2} \bigg]
    \, \E\!\int_0^T |F(s)|^p\,ds,
  \end{multline}
  where $R$ denotes the covariance operator of the Brownian
  component of $Y$.
\end{lemma}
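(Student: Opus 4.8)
The plan is to split $Y$ into its Gaussian and purely discontinuous parts and to estimate the jump part by iterating the Burkholder inequality (\ref{eq:BDG}). By the L\'evy--It\^o decomposition and the martingale property of $Y$ — which, since $\int_K|x|^2\,m(dx)<\infty$, lets one compensate \emph{all} jumps — one writes $Y=W+L$, with $W$ a $K$-valued Brownian motion of covariance operator $R$, $L(t)=\int_0^t\int_K x\,\tilde N(ds,dx)$ (here $N$ is the jump measure of $Y$, with compensator $ds\,m(dx)$, and $\tilde N=N-ds\,m(dx)$), and $W,L$ independent. Since $(a+b)^p\lesssim_p a^p+b^p$, it suffices to bound the $p$-th moments of $\sup_{t\le T}|\int_0^tF\,dW|$ and $\sup_{t\le T}|\int_0^tF\,dL|$ separately. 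For the Gaussian part, (\ref{eq:BDG}) applied to the continuous martingale $\int_0^\cdot F\,dW$ gives a bound by $C(p)\,\E\big(\int_0^T|F(s)R^{1/2}|_2^2\,ds\big)^{p/2}$; using $|F(s)R^{1/2}|_2^2\le|F(s)|^2\,\tr(R)=|F(s)|^2|R|_1$ and Jensen's inequality, this becomes $\lesssim_{p,T}|R|_1^{p/2}\,\E\int_0^T|F(s)|^p\,ds$, the first term in (\ref{eq:BJ}).

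For the jump part, put $J(t)=\int_0^t F(s)\,dL(s)=\int_0^t\int_K F(s)x\,\tilde N(ds,dx)$. Since $J$ is purely discontinuous with $\Delta J(s)=F(s)\Delta Y(s)$, one has $[J]_T=\int_0^T\int_K|F(s)x|^2\,N(ds,dx)$, so (\ref{eq:BDG}) gives $\E\sup_{t\le T}|J(t)|^p\le C(p)\,\E[J]_T^{p/2}$. The heart of the argument is to estimate, for $q\ge1$ and an integer $\ell\ge1$ with $2\ell q=p$ (the case of interest being $q=p/2$, $\ell=1$; the product $2\ell q$ will stay equal to $p$ throughout), the quantity $I(q,\ell):=\E\big(\int_0^T\int_K|F(s)x|^{2\ell}\,N(ds,dx)\big)^q$. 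Decomposing $N=\tilde N+ds\,m(dx)$, write $\int_0^T\int_K|F(s)x|^{2\ell}N(ds,dx)=\mathcal{A}_T+\mathcal{B}_T$ with $\mathcal{B}_T=\int_0^T\int_K|F(s)x|^{2\ell}m(dx)\,ds$ the predictable compensator and $\mathcal{A}$ a real local martingale vanishing at $0$. Then $I(q,\ell)\lesssim_q\E\mathcal{A}_T^q+\E\mathcal{B}_T^q$; Jensen's inequality together with $|F(s)x|\le|F(s)||x|$ gives $\E\mathcal{B}_T^q\le T^{q-1}\big(\int_K|x|^{2\ell}m(dx)\big)^q\,\E\int_0^T|F(s)|^{p}\,ds$; and since $\mathcal{A}$ is purely discontinuous with $\Delta\mathcal{A}_s=|F(s)\Delta Y(s)|^{2\ell}$, we have $[\mathcal{A}]_T=\int_0^T\int_K|F(s)x|^{4\ell}N(ds,dx)$, whence by (\ref{eq:BDG}) $\E\mathcal{A}_T^q\le C(q)\,I(q/2,2\ell)$. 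Iterating this — each step halving the outer exponent and doubling $\ell$ — brings the outer exponent below $1$ after finitely many steps; for an exponent $q<1$ one uses instead the subadditivity of $t\mapsto t^q$: $I(q,\ell)=\E\big(\sum_{s\le T}|F(s)\Delta Y(s)|^{2\ell}\big)^q\le\E\int_0^T\int_K|F(s)x|^{2\ell q}N(ds,dx)=\E\int_0^T\int_K|F(s)x|^{p}m(dx)\,ds\le\big(\int_K|x|^p m(dx)\big)\,\E\int_0^T|F(s)|^p\,ds$, where $2\ell q=p$ and the middle identity is the compensation formula for the nonnegative predictable integrand. The finiteness needed along the way — of the compensators $\mathcal{B}$ and of the moments of the martingales $\mathcal{A}$ — follows from the local boundedness of $F$ and from $\int_K|x|^q\,m(dx)<\infty$ for $q\in[2,p]$, since every exponent arising lies in $[2,p]$; a routine localization with stopping times reduces matters to bounded $F$.

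Collecting terms, $\E[J]_T^{p/2}$ is bounded by $N(p,T)\,\E\int_0^T|F(s)|^p\,ds$ times a finite sum of factors of the form $\big(\int_K|x|^r m(dx)\big)^{p/r}$, $r\in[2,p]$ (one per iteration level, with $r=2\ell$, plus a final one with $r=p$). Each such factor is at most $\big(\int_K|x|^2m(dx)\big)^{p/2}+\int_K|x|^p m(dx)$: writing $r=2\theta+(1-\theta)p$ with $\theta\in[0,1]$, H\"older's inequality on $m$ gives $\int_K|x|^r m(dx)\le\big(\int_K|x|^2m(dx)\big)^\theta\big(\int_K|x|^p m(dx)\big)^{1-\theta}$, and Young's inequality applied with the exponents $r/(2\theta)$ and $r/((1-\theta)p)$ — which are $\ge1$ and conjugate precisely because $2\theta+(1-\theta)p=r$ — yields the stated bound. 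Combined with the Gaussian contribution, this proves (\ref{eq:BJ}). The main obstacle is the bookkeeping in the iteration: tracking the exponents so that the intermediate terms collapse exactly to the three announced ones, and verifying the integrability that legitimises the compensation formula and Burkholder's inequality at each level. The one move that is not pure routine is re-applying Burkholder's inequality to the \emph{compensated} martingale $\mathcal{A}$ in order to push the outer exponent below $1$, where subadditivity becomes available.
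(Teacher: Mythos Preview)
Your argument is correct and follows essentially the same route as the paper: split off the Gaussian part, then iterate Burkholder's inequality on the jump part, at each step separating a compensator (producing the intermediate moments $(\int|x|^{2^i}m(dx))^{p/2^i}$) from a martingale to which Burkholder is re-applied, terminating by subadditivity once the outer exponent drops below $1$. The only cosmetic differences are that you phrase the iteration via the Poisson random measure $N$ rather than the quadratic variation process $[Y]$, and that you collapse the intermediate moments to the two extreme ones by H\"older--Young instead of the paper's auxiliary probability measure $\mu(dx)=|x|^2 m(dx)/Z$ with Jensen; both are equivalent.
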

\begin{proof}
Appealing to the L\'evy-It\^o's decomposition and Burkholder's inequality
for stochastic integrals with respect to Hilbert space valued Wiener processes,
we can assume, without loss of generality, that $R=0$.
Let $k\in\mathbb{N}$ be such that
$p\in[2^k,2^{k+1}[$. Then Burkholder's inequality (\ref{eq:bagigio}) yields
\[
\E\Big|\sup_{t\leq T} \int_0^t F(s)\,dY(s)\Big|^p \lesssim_p 
\E\Big( \int_0^T |F(s)|^2\,d[Y](s)\Big)^{p/2}.
\]
Setting
\[
\alpha = \E[Y](1) = \E \sum_{s\leq 1} |\Delta Y(s)|^2
= \int_K |x|^2\,m(dx),
\]
we have that $\alpha<\infty$ and $[Y](t)-\alpha t$ is a martingale,
because $[Y](t)$ is a one-dimensional L\'evy process. Moreover,
we can write
\begin{multline*}
\E\Big( \int_0^T |F(s)|^2\,d[Y](s) \Big)^{p/2} \\
\lesssim_p
\E\Big| \int_0^T |F(s)|^2\,d([Y](s)-\alpha s) \Big|^{p/2} +
\alpha^{p/2} \E\Big( \int_0^T |F(s)|^2\,ds \Big)^{p/2}.
\end{multline*}
Applying again Burkholder's inequality to the first term on the right
hand side of the previous expression one gets
\[
\E\Big|\int_0^T |F(s)|^2\,d([Y](s)-\alpha s)\Big|^{p/2} \lesssim_p
\E\Big(\sum_{s\leq t}|F(s)|^4 |\Delta Y(s)|^4\Big)^{p/4}.
\]
Since
\[
\Big( \int_0^T |F(s)|^2\,ds \Big)^{p/2} \lesssim_{p,T}
\int_0^T |F(s)|^p\,ds,
\]
we obtain, iterating the above decomposition procedure,
\begin{align*}
\E \Big| \sup_{t\leq T} \int_0^t F(s)\,dY(s) \Big|^p
\;\lesssim_{p,T}\;& 
\E \Big( 
\sum_{s\leq t} |F(s)|^{2^{k+1}} |\Delta Y(s)|^{2^{k+1}}
\Big)^{p/2^{k+1}}\\
& + \left(
\sum_{i=1}^k \Big( \int |x|^{2^i}\,m(dx)\Big)^{p/2^i}
\right)
\E \int_0^T |F(s)|^p\,ds.
\end{align*}
We can write
\begin{align*}
\Big( \sum_{s\leq t} |F(s)|^{2^{k+1}} |\Delta Y(s)|^{2^{k+1}}
\Big)^\frac{p}{2^{k+1}} &=
\left( \sum_{s\leq t} \big( |F(s)|^{2^k} |\Delta Y(s)|^{2^k} \big)^2
\right)^{\frac12 \frac{p}{2^k}}\\
&=: |a|_{\ell_2}^{p/2^k} \leq |a|_{\ell_{p/2^k}}^{p/2^k} =
\sum_{s\leq t} |F(s)|^p |\Delta Y(s)|^p,
\end{align*}
where we have used the inequality $|a|_{\ell_2} \leq |a|_{\ell_q}$,
which holds for all $a \in \ell_q$, $q\in [1,2]$.

\noindent Since the process $|F(\cdot)|^p$ is predictable and
$\sum_{s\leq t} |\Delta Y(s)|^p$ is an increasing adapted c\`adl\`ag
process with compensator $t \mapsto t\int_K |x|^p\,m(dx)$, we have that
\[
\int_0^t |F(s)|^p\,d\Big( \sum_{r\leq s} |\Delta Y(r)|^p
- s \int_K |x|^p\,m(dx) \Big)
\]
is a martingale with expectation zero. Therefore we can write
\begin{equation*}
\E \Big| \int_0^t F(s)\,dY(s) \Big|^p \;\lesssim_{p,T}\;
\bigg[
\int_K |x|^p\,m(dx)
+ \sum_{i=1}^k \Big( \int_K |x|^{2^i}\,m(dx)\Big)^{p/2^i}
\bigg]
\int_0^t |F(s)|^p\,ds
\end{equation*}
In order to complete the proof it is then enough to show that
\[
\Big( \int_K |x|^{2^i}\,m(dx)\Big)^{p/2^i}
\leq
\Big( \int_K |x|^2\,m(dx)\Big)^{p/2} +
\int_K |x|^p\,m(dx)
\]
for all $i\in[1,k]$.
Define the probability measure on $K$
\[
\mu(dx) = \frac{1}{Z} |x|^2 m(dx),
\]
where $Z=\int_K |x|^2\,m(dx)$, and set, for simplicity of
notation, $q=2^i$.
Then proving the above inequality is equivalent to proving that
\[
Z^{p/q} \Big( \int_K |x|^{q-2}\,\mu(dx)\Big)^{p/q}
\leq
Z^{p/2} + Z \int_K |x|^{p-2}\,\mu(dx).
\]
We distinguish two cases: if
\[
\Big( \int_K |x|^{q-2}\,\mu(dx)\Big)^{p/q}
\leq
Z^{p/2-p/q}
\]
the inequality is obvious, otherwise, if
\[
Z \leq \Big( \int_K |x|^{q-2}\,\mu(dx)\Big)^{\frac{2}{q-2}},
\]
then it is sufficient to prove that
\[
Z^{p/q-1}\Big( \int_K |x|^{q-2}\,\mu(dx)\Big)^{p/q}
\leq
\int_K |x|^{p-2}\,\mu(dx),
\]
which follows immediately by the upper bound on $Z$ and
Jensen's inequality.
\end{proof}
\begin{rmk}
  The fact that one can obtain a Bichteler-Jacod inequality with
  constant independent of the dimension is irrelevant for the purposes
  of \cite{ProTal-Euler}, which deals with stochastic equations in
  $\erre^d$, but appears to be known to specialists in Malliavin
  calculus for jumps -- see e.g.  lemma 5.1 and remark 5.2 in
  \cite{BGJ}, and references therein.
\end{rmk}
For convenience we shall denote by $\mathfrak{m}_p$ the term in square
parentheses appearing in (\ref{eq:BJ}). The same type of inequality
can be established also for stochastic convolutions, even though in
general they are not martingales.
\begin{prop}
  Let $A$ be the generator of a strongly continuous contraction
  semigroup on $E$ and $F:[0,T] \to \mathcal{L}_2(K,E)$ be a locally
  bounded predictable process. Let $p\in [2,\infty)$ and $M \in
  \mathcal{M}_p$. Then there exists a constant $N$, which depends only
  on $p$ and $T$, such that
  \begin{equation}
    \label{eq:BDGconv}
    \E\sup_{t\leq T} \Big| \int_0^t e^{(t-s)A} F(s)\,dM(s)\Big|^p
    \leq N \mathfrak{m}_p \, \E \int_0^T |F(s)|^p\,ds.
  \end{equation}
\end{prop}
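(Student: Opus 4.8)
The plan is to reduce the convolution estimate to the genuine-martingale estimate \eqref{eq:BJ} by the classical factorization/subordination trick that goes back to Kotelenez, exploiting the fact that $e^{tA}$ is a contraction. First I would show it suffices to treat the case where $M$ is a single (one-dimensional or Hilbert-valued) martingale L\'evy process $Y$ with $Y(0)=0$, $R=0$, and L\'evy measure $m$ satisfying $\int_K|x|^q\,m(dx)<\infty$ for $q\in[2,p]$; the Brownian part is handled separately by the corresponding Wiener-case maximal inequality for contraction semigroups (e.g.\ by the same argument with $\mathfrak{m}_p$ replaced by $|R|_1^{p/2}$), and by linearity the two contributions add up. So the core is: for $M\in\mathcal{M}_p$ and $F:[0,T]\to\mathcal{L}_2(K,E)$ predictable and locally bounded,
\[
\E\sup_{t\leq T}\Big|\int_0^t e^{(t-s)A}F(s)\,dM(s)\Big|^p \lesssim_{p,T} \mathfrak{m}_p\,\E\int_0^T|F(s)|^p\,ds .
\]

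The key step is the Kotelenez-type argument. For a fixed $t$, write $Z(t)=\int_0^t e^{(t-s)A}F(s)\,dM(s)$. One cannot apply Burkholder directly because $Z$ is not a martingale. Instead, introduce the stopped/deterministically-time-changed process: for $\tau\in[0,T]$ consider the genuine martingale $\zeta_\tau(t)=\int_0^t e^{(\tau-s)A}\chi_{[0,\tau]}(s)F(s)\,dM(s)$, so that $Z(\tau)=\zeta_\tau(\tau)$. The idea is to dominate $\sup_\tau|Z(\tau)|$ using a single application of Lemma~\ref{lem:cov}'s successor \eqref{eq:BJ} to a suitably chosen integrand. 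Concretely, I would use the inequality
\[
\sup_{t\leq T}|Z(t)| \leq \sup_{t\leq T}\Big|\int_0^t e^{-sA}F(s)\,dM(s)\Big|_{\text{(in a bigger space)}} ,
\]
realized rigorously via the following device: since $e^{tA}$ is a contraction, for $0\le s\le t$ one has $|e^{(t-s)A}x|\le|x|$, and one can write $e^{(t-s)A}F(s)=e^{tA}e^{-sA}F(s)$ only formally, so instead I would invoke the standard trick of passing through the semigroup's approximation or, more cleanly, use the subordination principle: there is a constant $N(p,T)$ such that for contraction semigroups
\[
\E\sup_{t\leq T}\Big|\int_0^t e^{(t-s)A}F(s)\,dM(s)\Big|^p \le N(p,T)\,\E\sup_{t\leq T}\Big|\int_0^t F(s)\,dM(s)\Big|^p ,
\]
which follows (as in Kotelenez, or Hausenblas--Seidler) from the fact that the convolution can be written as a Bochner integral against the non-convolution stochastic integral plus a boundary term, and the contraction property keeps all constants under control. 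Granting this reduction, apply \eqref{eq:BJ} with $Y=M$ to the right-hand side to get exactly $N(p,T)\mathfrak{m}_p\,\E\int_0^T|F(s)|^p\,ds$, which is the claim.

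The main obstacle is making the reduction from the convolution integral to the plain stochastic integral fully rigorous: $e^{-sA}$ need not be bounded, so one cannot literally factor $e^{(t-s)A}=e^{tA}e^{-sA}$, and the honest route is either (a) the Kotelenez argument, writing $Z(t)=Y_A(t)$ where $Y_A$ solves $dY_A=AY_A\,dt+F\,dM$ in the mild sense and using $\frac{d}{dt}\sup_{s\le t}|Y_A(s)|^p$-type estimates together with the contraction property, or (b) a direct dyadic/pathwise decomposition mirroring the proof of \eqref{eq:BJ} but carrying the semigroup inside, bounding each $|e^{(t-s)A}F(s)\Delta M(s)|\le|F(s)||\Delta M(s)|$ term by term — the jump terms survive verbatim, and the compensated-sum martingale arguments go through because $\sum_{s\le t}|e^{(t-s)A}F(s)\,\Delta M(s)|^p\le\sum_{s\le t}|F(s)|^p|\Delta M(s)|^p$ by contractivity, whose compensator is $\big(\int_K|x|^p\,m(dx)\big)\int_0^t|F(s)|^p\,ds$. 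I would favour route (b) since it reuses the already-proved Lemma almost line for line: repeat the dyadic Burkholder iteration of \eqref{eq:BJ}, at each stage estimating the discrete square-function of $e^{(t-s)A}F(s)\,\Delta M(s)$ by that of $F(s)\,\Delta M(s)$ via $|e^{(t-s)A}\cdot|\le|\cdot|$, and conclude with the same $\ell_2\hookrightarrow\ell_{p/2^k}$ embedding and predictable-compensator martingale argument, ending with the identical bound involving $\mathfrak{m}_p$. The $\sup_{t\le T}$ is recovered at the last step from the maximal form of Burkholder's inequality \eqref{eq:bagigio} applied to the outermost martingale, noting that the bound obtained is already uniform in $t$.
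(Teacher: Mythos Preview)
Your preferred route (b) has a genuine gap at exactly the point you flag as ``the main obstacle''. Redoing the Bichteler--Jacod iteration with the semigroup inside works fine for a \emph{fixed} $t$: the process $u\mapsto\int_0^u e^{(t-s)A}F(s)\,dM(s)$ is a martingale in $u$, Burkholder applies, contractivity gives $|e^{(t-s)A}F(s)\Delta M(s)|\le|F(s)||\Delta M(s)|$, and the iteration proceeds exactly as in \eqref{eq:BJ}. But the output is a bound on $\E|Z(t)|^p$ that is uniform in $t$, i.e.\ a bound on $\sup_{t\le T}\E|Z(t)|^p$, not on $\E\sup_{t\le T}|Z(t)|^p$. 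Your closing sentence (``the bound obtained is already uniform in $t$'') conflates these two quantities. Since $Z$ itself is not a martingale, no single application of the maximal form of Burkholder recovers the supremum inside the expectation; as $t$ varies you are looking at $\zeta_t(t)$ for a \emph{different} martingale $\zeta_t$ each time, and Burkholder on each $\zeta_t$ separately cannot control $\E\sup_t|\zeta_t(t)|^p$.

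The paper resolves this by the device you allude to but do not actually describe: Sz.-Nagy's dilation theorem (this is precisely the Hausenblas--Seidler argument you cite). A contraction semigroup $e^{tA}$ on $E$ dilates to a unitary group $T(t)$ on a larger Hilbert space $\bar E\supset E$, with $e^{tA}=\pi T(t)|_E$ for the orthogonal projection $\pi:\bar E\to E$. This makes the formal factorization $e^{(t-s)A}=e^{tA}e^{-sA}$ rigorous in the form $e^{(t-s)A}=\pi T(t)T(-s)$, so that
\[
Z(t)=\pi T(t)\int_0^t T(-s)F(s)\,dM(s),
\]
and now $t\mapsto\int_0^t T(-s)F(s)\,dM(s)$ is a single $\bar E$-valued martingale. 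Since $|\pi|\le 1$ and $|T(t)|=1$, one has $\sup_{t\le T}|Z(t)|_E\le\sup_{t\le T}\big|\int_0^t T(-s)F(s)\,dM(s)\big|_{\bar E}$, and a single application of \eqref{eq:BJ} to this martingale (with $|T(-s)F(s)|=|F(s)|$) finishes the proof. Your description of the Hausenblas--Seidler method as ``a Bochner integral against the non-convolution stochastic integral plus a boundary term'' is not what they do; the dilation is the whole point, and it is exactly what circumvents the unboundedness of $e^{-sA}$ that you correctly identified as the obstruction.
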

\begin{proof}
  We shall follow the approach of \cite{HauSei}. In particular, by
  Sz.-Nagy's theorem on unitary dilations, there exists a Hilbert
  space $\bar E$, with $E$ isometrically embedded into $\bar E$, and a
  unitary strongly continuous group $T(t)$ on $\bar E$ such that $\pi
  T(t)x=e^{tA}x$ for all $x\in E$, $t\in \erre$, where $\pi$ denotes
  the orthogonal projection from $\bar E$ to $E$. Then we have,
  recalling that the operator norms of $\pi$ and $T(t)$ are less or
  equal than one,
  \begin{eqnarray*}
    \E\sup_{t\leq T} \Big| \int_0^t e^{(t-s)A} F(s)\,dM(s)\Big|_E^p
    &=& \E\sup_{t\leq T}
             \Big|\pi T(t) \int_0^t T(-s) F(s)\,dM(s)\Big|_{\bar E}^p\\
    &\leq& |\pi|^p \; \sup_{t\leq T} |T(t)|^p \;
           \E\Big|\int_0^t T(-s) F(s)\,dM(s)\Big|_{\bar E}^p\\
    &\leq& \E\sup_{t\leq T} \Big|\int_0^t T(-s) F(s)\,dM(s)\Big|_{\bar E}^p.\\
  \end{eqnarray*}
  Since the integral in the last expression is a martingale,
  inequality (\ref{eq:BJ}) implies that there exists a constant $N=N(p,T)$
  such that
  \begin{eqnarray*}
    \E\sup_{t\leq T} \Big| \int_0^t e^{(t-s)A} F(s)\,dM(s) \Big|_E^p &\leq&
    N \mathfrak{m}_p \, \E\int_0^T |T(-s)F(s)|^p\,ds \\
    &\leq& N \mathfrak{m}_p \, \E\int_0^T |F(s)|^p\,ds,
  \end{eqnarray*}
  where we used again that $T(t)$ is a unitary group and that the
  norms of $F(s)$ in $\mathcal{L}_0(K,\bar E)$ and
  $\mathcal{L}_0(K,E)$ are equal.
\end{proof}

We now have all the tools to prove theorem \ref{thm:mild2}.
\begin{proof}[Proof of theorem \ref{thm:mild2}]
  As a first step we prove that the mapping $\F:
  \mathbb{H}_p(T) \to \mathbb{H}_p(T)$ defined by
  \begin{equation*}
    \F u(t) =  e^{tA} u_0 
    + \int_0^t e^{(t-s)A} f(s,u(s))\,ds
    + \int_0^t e^{(t-s)A} B(s,u(s))\,dM(s)
  \end{equation*}
  is well defined and is a contraction.  Let us prove that the image
  of $\F$ is in fact contained in $\mathbb{H}_p(T)$.
  Predictability of $\F u$ follows as in the proof of theorem
  \ref{thm:mild1}. Let us prove that $\|\F u\|_p<\infty$:
  Minkowski's inequality yields
  \begin{eqnarray*}
    \|\F u\|^p_p &\lesssim_p& \E\sup_{t\leq T}|e^{tA} u_0|^p
    + \E\sup_{t\leq T}\Big| \int_0^t e^{(t-s)A} f(s,u(s))\,ds \Big|^p\\
    && + \E\sup_{t\leq T}\Big| \int_0^t e^{(t-s)A} B(s,u(s))\,dM(s) \Big|^p,
  \end{eqnarray*}
  where $\E\sup_{t\leq T}|e^{tA}u_0|^p \leq \sup_{t\leq T} |e^{tA}|^p
  \E|u_0|^p<\infty$.  Moreover, by Cauchy-Schwarz' inequality and
  (\ref{eq:i}) we get
  \begin{eqnarray*}
    \E\sup_{t\leq T} \left|\int_0^t e^{(t-s)A}f(s,u(s))\,ds\right|^p &\leq&
    \E\sup_{t\leq T} \left[\int_0^t |e^{(t-s)A}f(s,u(s))|\,ds\right]^p \\
    &\leq& T^{p/2} \E\sup_{t\leq T}\left[\int_0^t |e^{(t-s)A}f(s,u(s))|^2\,ds\right]^{p/2} \\
    &\leq& T^{p/2} \sup_{t\leq T} \Big(\int_0^t h(t-s)^2\,ds\Big)^{p/2}
    \E\sup_{t\leq T} (1+|u(t)|)^p
  \end{eqnarray*}
  hence
  $$
  \E \sup_{t\leq T} \left|\int_0^t e^{(t-s)A}f(s,u(s))\,ds\right|^p
  \lesssim (1+ \E\sup_{t\leq T}|u(t)|^p) < \infty.
  $$

  Similarly, using the maximal inequality (\ref{eq:BDGconv}) and
  (\ref{eq:ii}), we obtain
  \begin{align*}
    \E\sup_{t\leq T} \left| \int_0^t e^{(t-s)A}B(s,u(s))\,dM(s) \right|^p
    &\lesssim_{p,T,m} \, \E\!\int_0^T |B(s,u(s))|^p\,ds \\
    &\leq \, \E\!\int_0^T h(s)^p (1+|u(s)|)^p\,ds\\
    &\lesssim  \big(1 + \E\sup_{s\leq T} |u(t)|^p \big) \int_0^T h(s)^p\,ds
    < \infty,
  \end{align*}
  thus completing the proof that $\F(\mathbb{H}_p(T))
  \subset \mathbb{H}_p(T)$.

  Analogous calculations show that there exists a constant
  $N=N(T,h,p,m)$ such that $\|\F u - \F v\|_p \leq N\|u-v\|_p$, with
  $N(T_0,h,p,m)<1$ for some $T_0>0$. The existence and uniqueness
  proof is then finished exactly as in the proof of theorem
  \ref{thm:mild1}. The estimate (\ref{eq:fuoco}) is also established
  in the same way, with very minor modifications.
\end{proof}

\section{Proofs of theorems \ref{thm:1} and \ref{thm:2}}
\label{sec:proofs}
In order to apply theorems \ref{thm:mild1} and \ref{thm:mild2} to
(\ref{eq:musso}) we need to prove that $B$ and $f$ satisfy Lipschitz
conditions. Unfortunately we can only prove that $B$ and $f$ are locally
Lipschitz, hence we obtain only a local existence and uniqueness
result.

Let us briefly recall that the volatility coefficient $\sigma$
appearing in (\ref{eq:musso}) is a random function from
$[0,T]\times\erre_+\times\erre$ to $K$, and that the operator $B$ is
defined as $[B(t,u)\phi](x) = \ip{\sigma(t,x,u(x))}{\phi}_K$, for
$\phi \in K$.
Let us also define the space $\fH=L_{2,\alpha}^1(\erre_+,K)$, equipped
with the norm
$$
|\phi|_\fH^2 := 
|\phi(0)|_K^2 + \int_0^\infty |\phi'(x)|_K^2 \alpha(x)\,dx < \infty.
$$

\begin{prop}\label{prop:locl1}
Assume that hypotheses (i)--(iv) of theorem \ref{thm:1} are
satisfied.  Then $B(t,u) \in \L_2(K,H)$ for all $u$ and there exists a
constant $N$ such that $|B(t,u)|_2 \leq N(1+R)|u|$ and
$|B(t,u)-B(t,v)|_2 \leq N(1+R)|u-v|_H$ for all $u$, $v \in B_R(H)$,
uniformly over $t\geq 0$.
\end{prop}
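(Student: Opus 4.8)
The plan is to estimate the Hilbert-Schmidt norm $|B(t,u)|_2$ directly from the definition of the $H$-inner product. Recall that for $\phi\in K$ we have $[B(t,u)\phi](x)=\ip{\sigma(t,x,u(x))}{\phi}_K$, so that $B(t,u)\phi$ is, as a function of $x$, the $k$-th Fourier coefficients $\sigma^k(t,x,u(x))$ weighted by $\phi^k$. Expanding $B(t,u)$ in the orthonormal basis $(e_k)$ of $K$ and using the definition $|\psi|_H^2=|\psi(0)|^2+\int_0^\infty|\psi'(x)|^2\alpha(x)\,dx$, one finds
$$
|B(t,u)|_2^2 = \sum_{k\in\enne} |B(t,u)e_k|_H^2 Q_k
= \sum_{k\in\enne}\Big(|\sigma^k(t,0,u(0))|^2 + \int_0^\infty \big|\partial_x[\sigma^k(t,x,u(x))]\big|^2\alpha(x)\,dx\Big)Q_k,
$$
where $Q_k=\ip{Qe_k}{e_k}$ and $\sum_k Q_k=\tr Q<\infty$ (or, more precisely, one first checks $B(t,u)Q^{1/2}\in\L_2$); the key point is that the weighted-$\ell_2$ structure in hypothesis (ii) is exactly matched to the $K$-valued space $\fH$. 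Differentiating the composition gives $\partial_x[\sigma^k(t,x,u(x))]=\sigma^k_x(t,x,u(x))+\sigma^k_u(t,x,u(x))u'(x)$, so the integrand splits into two pieces.

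For the first piece I would write $\sigma^k_x(t,x,u(x))=\big(\sigma^k_x(t,x,u(x))-\sigma^k_x(t,x,0)\big)+\sigma^k_x(t,x,0)$; the difference is bounded by $\beta^k(x)|u(x)|\le\beta^k(x)|u|_\infty\lesssim\beta^k(x)|u|_H$ using (ii) and (\ref{eq:pippo-inf}), and summing $\sum_k\int(\beta^k(x))^2\alpha(x)\,dx=|\beta|_{\alpha,\ell_2}^2<\infty$ by the assumption $\beta\in L_{2,\alpha}(\erre_+,\ell_2)$, while the term $\sigma^k_x(t,x,0)$ contributes $|\sigma(t,\cdot,0)|_{1,\alpha,K}^2< C^2$ by (iv). For the second piece, $|\sigma^k_u(t,x,u(x))u'(x)|\le\gamma^k|u'(x)|$ by (iii), and $\sum_k(\gamma^k)^2\int_0^\infty|u'(x)|^2\alpha(x)\,dx\le|\gamma|_{\ell_2}^2|u|_H^2$. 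The boundary term $\sum_k|\sigma^k(t,0,u(0))|^2 Q_k$ is handled the same way by splitting around $u(0)=0$ and using (iv) together with $|u(0)|\le|u|_\infty\lesssim|u|_H$. Collecting, $|B(t,u)|_2\lesssim(1+R)|u|_H$ on $B_R(H)$ — the factor $(1+R)$ (rather than just $|u|_H$) appears because one of the cross terms is quadratic in $u$.

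For the Lipschitz estimate I would subtract and again differentiate under the integral. Writing $\partial_x[\sigma^k(t,x,u(x))-\sigma^k(t,x,v(x))]$ as the difference of the two expressions above, one gets four terms after the obvious telescoping: $\sigma^k_x(t,x,u)-\sigma^k_x(t,x,v)$ controlled by $\beta^k(x)|u-v|_\infty$ via (ii); $(\sigma^k_u(t,x,u)-\sigma^k_u(t,x,v))u'(x)$ controlled by $\gamma^k|u-v|_\infty|u'(x)|$ via the bound on $\sigma^k_{uu}$ in (iii) — this is where the $R$ enters, since $\int|u'|^2\alpha\le R^2$; and $\sigma^k_u(t,x,v)(u'(x)-v'(x))$ controlled by $\gamma^k|u'(x)-v'(x)|$. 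Summing in $k$ with the $\ell_2$ norms of $\beta(x)$ and $\gamma$ and using (\ref{eq:pippo-inf}) for the sup-norms yields $|B(t,u)-B(t,v)|_2\lesssim(1+R)|u-v|_H$. The main obstacle is purely bookkeeping: correctly differentiating the composition $x\mapsto\sigma^k(t,x,u(x))$ in the distributional sense (which requires $u\in H$ to have an $L_2(\alpha)$ derivative and $\sigma^k\in C^{0,1,2}$, i.e. hypothesis (i)), and keeping the weighted-$\ell_2$ summation in $k$ aligned with the hypotheses so that all constants are finite and uniform in $t$; the quadratic-in-$u$ cross term is what forces the $(1+R)$ prefactor and the resulting merely local Lipschitz property.
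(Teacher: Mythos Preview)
Your approach is essentially identical to the paper's: expand in the basis $(e_k)$, differentiate the composition $x\mapsto\sigma^k(t,x,u(x))$, split $\sigma^k_x$ around $u=0$ to invoke (ii) and (iv), control the $\sigma^k_u u'$ piece by (iii), and telescope for the Lipschitz estimate (you even include the boundary term at $x=0$, which the paper's write-up suppresses). One slip to correct: the proposition is about the \emph{plain} Hilbert--Schmidt norm $|B(t,u)|_2^2=\sum_k|B(t,u)e_k|_H^2$ with no $Q_k$ weights --- the hypotheses $\beta\in L_{2,\alpha}(\erre_+,\ell_2)$ and $\gamma\in\ell_2$ are calibrated for exactly this unweighted sum, and indeed your subsequent estimates correctly use $|\beta|_{\alpha,\ell_2}$ and $|\gamma|_{\ell_2}$ rather than $Q$-weighted versions, so simply drop the spurious $Q_k$ and the argument goes through as written.
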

\begin{proof}
  We have
  $$
  |B(t,u)|_2^2 = \sum_{k=1}^\infty | B(t,u)e_k |_H^2
  = \sum_{k=1}^\infty \int_0^\infty |\sigma^k(t,x,u(x))_x|^2 \alpha(x)\,dx
  $$
  and
  \begin{equation*}
    \int_0^\infty |\sigma^k(t,x,u(x))_x|^2 \alpha(x)\,dx
    \lesssim \int_0^\infty \big(\sigma^k_x(t,x,u(x))^2
    + \sigma^k_u(t,x,u(x))^2 u'(x)^2\big) \alpha(x)\,dx,
  \end{equation*}
  where, using (\ref{eq:gna}) and (\ref{eq:pippo-inf}),
  \begin{align*}
    \int_0^\infty |\sigma^k_x(t,x,u(x))|^2 \alpha(x)\,dx &\lesssim
    \int_0^\infty |\sigma^k_x(t,x,u(x))-\sigma^k_x(t,x,0)|^2 \alpha(x)\,dx
    + |\sigma(t,\cdot,0)|^2_\fH\\
    &\leq \int_0^\infty \beta^k(x)^2 u(x)^2 \alpha(x)\,dx
    + |\sigma(t,\cdot,0)|^2_\fH\\
    &\leq |\beta^k|^2_\alpha |u|_\infty^2 + |\sigma(t,\cdot,0)|^2_\fH
    \lesssim_\alpha |\beta^k|^2_\alpha |u|_H^2 + |\sigma(t,\cdot,0)|^2_\fH,
  \end{align*}
hence
\begin{equation}
\label{eq:buu}
|B(t,u)|_2^2 \leq |\beta|^2_{\alpha,\ell_2} |u|_H^2 +
|\sigma(t,\cdot,0)|^2_\fH < \infty.
\end{equation}
Similarly,
\begin{equation*}
|B(t,u)-B(t,v)|_2^2 = \sum_{k=1}^\infty \int_0^\infty
\big|\sigma^k(t,x,u(x))_x - \sigma^k(t,x,v(x))_x\big|^2 \alpha(x)\,dx,
\end{equation*}
and
\begin{multline*}
\big|(\sigma^k(t,x,u(x)) - \sigma^k(t,x,v(x)))_x \big|^2 \lesssim
\big|\sigma^k_x(t,x,u(x)) - \sigma^k_x(t,x,v(x)) \big|^2\\
+ \big|\sigma^k_u(t,x,u(x))u'(x) - \sigma^k_u(t,x,v(x))v'(x)\big|^2.
\end{multline*}
We also have 
\begin{equation*}
\sum_{k=1}^\infty \int_0^\infty
\big|\sigma^k_x(t,x,u(x)) - \sigma^k_x(t,x,v(x)) \big|^2 \alpha(x)\,dx
\leq |\beta|^2_{\alpha,\ell_2} \, |u-v|_H^2,
\end{equation*}
and, by virtue of (\ref{eq:pru}) and (\ref{eq:pippo-inf}),
\begin{eqnarray*}
\lefteqn{\sum_{k=1}^\infty \int_0^\infty \big|\sigma^k_u(t,x,u(x))u'(x)
   - \sigma^k_u(t,x,v(x))v'(x)\big|^2 \alpha(x)\,dx}\\
&\lesssim& \sum_{k=1}^\infty \int_0^\infty \big|\sigma^k_u(t,x,u(x))u'(x)
    - \sigma^k_u(t,x,u(x))v'(x)\big|^2 \alpha(x)\,dx\\
&& + \sum_{k=1}^\infty \int_0^\infty \big|\sigma^k_u(t,x,u(x))v'(x)
     - \sigma^k_u(t,x,v(x))v'(x)\big|^2 \alpha(x)\,dx \\
&\leq& |\gamma|^2_{\ell_2} |u-v|^2_{1,\alpha}
       + |\gamma|^2_{\ell_2} \int_0^\infty |(u(x)-v(x))v'(x)|^2 \alpha(x)\,dx\\
&\lesssim& |u-v|^2_H + |u-v|^2_\infty |v|^2_{1,\alpha}
       \lesssim (1+|v|_H^2)|u-v|^2_H,
\end{eqnarray*}
where in the second last step we have used the fact that
$|\sigma^k_{uu}|<\gamma^k$ implies that $\sigma^k_u$ is Lipschitz with
respect to $u$ with Lipschitz constant not greater than $\gamma^k$.
The above estimates imply that $|B(t,u)-B(t,v)|_2 \lesssim
(1+R)|u-v|_H$ for all $|u|_H$, $|v|_H\leq R$, hence also, by
(\ref{eq:buu}),
\begin{equation*}
|B(t,u)|_2 \lesssim |B(t,0)|_2 + (1+R)|u|
\leq |\sigma(t,\cdot,0)|_\fH + (1+R)|u|
\lesssim (1+R)|u|.
\qedhere
\end{equation*}
\end{proof}

\medskip

We shall now obtain conditions under which the function $g$ defined in
(\ref{eq:g}) satisfies a local Lipschitz condition. A useful tool is
given by the following lemma, which essentially allows to obtain
vector-valued versions of the inequalities
(\ref{eq:pippo-inf})-(\ref{eq:pippo-4}).

\begin{lemma}\label{lem:limo}
Let $\phi \in \fH$.
Then $|\phi|_K \in H$ and $||\phi|_K|_H \leq |\phi|_\fH$.
\end{lemma}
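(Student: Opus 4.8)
The plan is to reduce the statement to a pointwise estimate on the derivative of $x \mapsto |\phi(x)|_K$ and then integrate against the weight $\alpha$. First I would note that for $\phi \in \fH$ the map $x \mapsto \phi(x)$ is (a.e. equal to) an absolutely continuous $K$-valued function, so that $x \mapsto |\phi(x)|_K$ is absolutely continuous on $\erre_+$ as well, being the composition of $\phi$ with the $1$-Lipschitz map $y \mapsto |y|_K$ on $K$. Consequently $|\phi|_K$ is differentiable almost everywhere, and at every point $x$ where both $\phi$ and $|\phi|_K$ are differentiable one has the pointwise bound
\begin{equation*}
\Big| \frac{d}{dx}|\phi(x)|_K \Big| \leq |\phi'(x)|_K,
\end{equation*}
which follows from $\big| |\phi(x+h)|_K - |\phi(x)|_K \big| \leq |\phi(x+h)-\phi(x)|_K$ and dividing by $|h|$ before passing to the limit. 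I would also record that $|\phi(0)|_K$ is exactly the value of $|\phi|_K$ at $0$, so the ``boundary'' term in the $H$-norm of $|\phi|_K$ matches the corresponding term in $|\phi|_\fH$ verbatim.

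Given this, the computation is immediate: by definition of the norm on $H$,
\begin{equation*}
\big| |\phi|_K \big|_H^2
= \int_0^\infty \Big| \frac{d}{dx}|\phi(x)|_K \Big|^2 \alpha(x)\,dx + |\phi(0)|_K^2
\leq \int_0^\infty |\phi'(x)|_K^2 \alpha(x)\,dx + |\phi(0)|_K^2
= |\phi|_\fH^2,
\end{equation*}
which in particular shows $|\phi|_K \in H$ and gives the claimed inequality $\big| |\phi|_K \big|_H \leq |\phi|_\fH$.

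The only genuine technical point — and the step I would be most careful about — is the justification that $x \mapsto |\phi(x)|_K$ is absolutely continuous, i.e. lies in the Sobolev-type space underlying $H$, rather than merely being a distribution. This rests on the fact that an element of $\fH = L^1_{2,\alpha}(\erre_+,K)$ has its distributional derivative in a weighted $L_2$ space, and since $\alpha \geq 1$ with $\alpha^{-1/3} \in L_1$ one has $\phi' \in L_1(\erre_+,K)$ locally (indeed the weight integrability is exactly what makes $\phi'$ integrable near infinity as well, mirroring the scalar estimates (\ref{eq:pippo-inf})--(\ref{eq:pippo-1})); hence $\phi(x) = \phi(0) + \int_0^x \phi'(y)\,dy$ is a genuine absolutely continuous $K$-valued function, and the chain-rule-type inequality for the Lipschitz function $|\cdot|_K$ applies in the almost-everywhere sense. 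Once absolute continuity is in hand, everything else is the elementary displayed computation above.
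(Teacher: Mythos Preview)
Your proof is correct, but it follows a different route from the paper's. The paper regularizes the norm: it picks a smooth approximation $\eta_\varepsilon\in C^1(K,\erre)$ of $|\cdot|_K$ with $|D\eta_\varepsilon|\leq 1$, applies the chain rule to $\eta_\varepsilon\circ\phi$ to get $|\eta_\varepsilon(\phi)|_H \leq |\phi|_\fH$ with a bound independent of $\varepsilon$, and then lets $\varepsilon\to 0$. Your argument instead exploits directly that $|\cdot|_K$ is $1$-Lipschitz, so $x\mapsto|\phi(x)|_K$ inherits absolute continuity from $\phi$ and the pointwise bound $|(|\phi|_K)'|\leq|\phi'|_K$ follows from the reverse triangle inequality. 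Your approach is more elementary and avoids the implicit lower-semicontinuity step needed to pass to the limit in the paper's argument; the paper's approach, on the other hand, stays entirely within smooth calculus and sidesteps any discussion of Sobolev-type chain rules for Lipschitz outer functions. Both reach the same inequality with the same constant.
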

\begin{proof}
  It is enough to prove the statement assuming $\phi(0)=0$, as it is
  immediate to see by comparing the definitions of $||\cdot|_K|_H$ and
  $|\cdot|_\fH$.  Let $\eta_\varepsilon \in C^1(K,\erre)$ be
  a smooth approximation of the norm of $K$, such that
  $|D\eta_\varepsilon(x)|\leq 1$ for all $x\in K$.  Then we have
  \begin{eqnarray*}
    |\eta_\varepsilon(\phi)|^2_H &=&
    \int_0^\infty |D_x \eta_\varepsilon(\phi(x))|_K^2\alpha(x)\,dx\\
    &=& \int_0^\infty |D\eta_\varepsilon(\phi(x))|_K^2
              |\phi'(x)|^2\alpha(x)\,dx\\
    &\leq& \int_0^\infty |\phi'(x)|_K^2\alpha(x)\,dx
    = |\phi|^2_\fH < \infty.
  \end{eqnarray*}
  Since the above bound does not depend on $\varepsilon$ and
  $\eta_\varepsilon(\phi(x)) \to |\phi(x)|_K$ as $\varepsilon \to 0$,
  we infer that $|\phi(x)|_K \in H$ and $||\phi|_K|_H \leq
  |\phi|_\fH$.
\end{proof}

\begin{prop}\label{prop:locl2}
  There exists a positive constant $N$ depending only on $\alpha$ and
  $\psi$ such that
  $$
  |g(\sigma) - g(\rho)|_H \leq N (1+R^2) |\sigma-\rho|_\fH
  $$
  for all $\sigma$, $\rho \in B_R(\fH)$.
\end{prop}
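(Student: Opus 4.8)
The plan is to estimate the $H$-norm of $g(\sigma)-g(\rho)$ by writing out the definition of $g$ from (\ref{eq:g}) and decomposing the difference into a manageable number of terms, each of which can be controlled using the regularity of $\psi$ from Proposition \ref{prop:dui}, the embedding inequalities (\ref{eq:pippo-inf})--(\ref{eq:pippo-4}), and the vector-valued analogue provided by Lemma \ref{lem:limo}. Recall that
$$
g(\nu)(t,x) = \bip{\nu(t,x)}{D\psi\Big(-\int_0^x\nu(t,y)\,dy\Big)},
$$
so $g(\nu)$ is a scalar-valued function of $x$ (for fixed $t$), and to bound $|g(\sigma)-g(\rho)|_H$ I need to estimate the weighted $L_2$-norm of $\partial_x[g(\sigma)-g(\rho)]$ plus the value at $x=0$. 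The value at $x=0$ is easy since both integrals vanish there and $D\psi$ is bounded (indeed $g(\nu)(t,0) = \ip{\nu(t,0)}{D\psi(0)}$), so the Lipschitz bound there follows from $|D\psi(0)|$ being finite together with $|\nu(t,0)|_K \lesssim |\nu|_\fH$.

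The main work is the derivative term. Writing $\Sigma(x) := -\int_0^x\sigma(t,y)\,dy$ and similarly $P(x) := -\int_0^x\rho(t,y)\,dy$, the chain rule gives
$$
\partial_x g(\sigma)(t,x) = \ip{\sigma_x(t,x)}{D\psi(\Sigma(x))} + D^2\psi(\Sigma(x))(\sigma(t,x),\Sigma'(x)),
$$
with $\Sigma'(x) = -\sigma(t,x)$, so the second term is $-D^2\psi(\Sigma(x))(\sigma(t,x),\sigma(t,x))$. Taking the difference $\partial_x g(\sigma) - \partial_x g(\rho)$ and adding and subtracting intermediate terms, I get a sum of expressions of two types: (a) differences coming from $\sigma_x - \rho_x$ or $\sigma - \rho$ appearing in the ``outer'' slot, multiplied by bounded factors $D\psi$ or $D^2\psi$ evaluated at one of the curves; and (b) differences coming from $D\psi(\Sigma(x)) - D\psi(P(x))$ (resp. $D^2\psi$), which by the Lipschitz continuity of $D\psi$ and $D^2\psi$ on $B_r(K)$ are bounded by $|\Sigma(x)-P(x)|_K = \big|\int_0^x(\sigma-\rho)(t,y)\,dy\big|_K \leq |\sigma-\rho|_{1,\alpha,K}\lesssim|\sigma-\rho|_\fH$ (using (\ref{eq:pippo-1}) in its $K$-valued form via Lemma \ref{lem:limo}, or rather the fact that $\int_0^\infty|\phi'|_K\,dx\cdot(\text{something})$ — more directly, Cauchy--Schwarz with $\alpha^{-1}\in L_1$ is not available, but $|\Sigma(x)|_K$ is uniformly bounded by $r$ by the standing assumption on $\sigma$). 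For the weighted $L_2$ integration, the type-(a) terms are handled by pulling out an $L_\infty$-factor (either $|\sigma(t,x)|_K\lesssim|\sigma|_\fH$ via (\ref{eq:pippo-inf})-type bound from Lemma \ref{lem:limo}, or a bounded derivative of $\psi$) and keeping the remaining factor in $L_{2,\alpha}$; the type-(b) terms similarly pull out the uniform Lipschitz-times-$|\sigma-\rho|_\fH$ bound and leave $|\sigma_x|_K$ or $|\sigma|_K^2$ to be integrated, where $|\sigma|_K^2$ requires the bound $\big||\sigma|_K^2\big|_\alpha\lesssim|\sigma|_\fH^2$, the $K$-valued version of (\ref{eq:pippo-4}), again obtained from Lemma \ref{lem:limo}.

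Collecting everything, each term carries at most two factors of $|\sigma|_\fH$ or $|\rho|_\fH$ (hence the $R^2$) multiplying one factor of $|\sigma-\rho|_\fH$, with the $\psi$-dependent constants $\|D\psi\|_\infty$, $\|D^2\psi\|_\infty$, $[D\psi]_{\mathrm{Lip}}$, $[D^2\psi]_{\mathrm{Lip}}$ on $B_r(K)$ all finite by Proposition \ref{prop:dui}, and the $\alpha$-dependent constants from (\ref{eq:pippo-inf})--(\ref{eq:pippo-4}) and Lemma \ref{lem:limo}. I expect the main obstacle to be purely bookkeeping: organizing the telescoping of the difference $\partial_x g(\sigma)-\partial_x g(\rho)$ so that every resulting term genuinely has the form (bounded $\psi$-factor)$\times$(an $L_\infty$ or $L_{2,\alpha}$ norm of a curve)$\times$(an $L_{2,\alpha}$ norm of $\sigma-\rho$ or of $\int_0^\cdot(\sigma-\rho)$), and checking that the ``quadratic in $\sigma$'' term $D^2\psi(\Sigma)(\sigma,\sigma) - D^2\psi(P)(\rho,\rho)$ really does split into three pieces each of the right shape — one needs $|\sigma|_K\,|\sigma-\rho|_K$, $|\rho|_K\,|\sigma-\rho|_K$, and $|\rho|_K^2\cdot[D^2\psi]_{\mathrm{Lip}}|\Sigma-P|_K$ — and then integrating $|\sigma|_K|\sigma-\rho|_K\sqrt\alpha$ etc. against $\sqrt\alpha$ via Cauchy--Schwarz together with the pointwise $L_\infty$ control. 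No single estimate is hard; the care is in not losing a power of $R$ or accidentally needing a norm that is not controlled by $|\cdot|_\fH$.
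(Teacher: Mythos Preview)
Your proposal is correct and follows essentially the same approach as the paper: decompose $|g(\sigma)-g(\rho)|_H$ into the value at $0$ plus the $|\cdot|_{1,\alpha}$ seminorm, compute the $x$-derivative via the chain rule to get the $\ip{\sigma'}{D\psi(S)}$ and $D^2\psi(S)(\sigma,\sigma)$ terms, telescope each difference, and control the pieces using the boundedness and Lipschitz properties of $D\psi$, $D^2\psi$ from Proposition~\ref{prop:dui} together with (\ref{eq:pippo-inf})--(\ref{eq:pippo-4}) transferred to $K$-valued functions via Lemma~\ref{lem:limo}. The only cosmetic wobble is the notation ``$|\sigma-\rho|_{1,\alpha,K}$'' for the uniform bound on $|\Sigma(x)-P(x)|_K$ --- what you want (and what the paper uses) is $\int_0^\infty |\sigma-\rho|_K\,dy \lesssim \big|\,|\sigma-\rho|_K\,\big|_H \leq |\sigma-\rho|_\fH$ via (\ref{eq:pippo-1}) and Lemma~\ref{lem:limo}, exactly as you indicate in the parenthesis.
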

\begin{proof}
By definition of $g$ one immediately gets
$$
|g(\sigma) - g(\rho)|_H = |g(\sigma(0)) - g(\rho(0))|
+ |g(\sigma)-g(\rho)|_{1,\alpha}
$$
and $|g(\sigma(0)) - g(\rho(0))| \lesssim_\psi |\sigma(0)-\rho(0)|_K
\leq |\sigma-\rho|_\fH$.  Setting $S(x)=-\int_0^x\sigma(y)\,dy$ and
$R(x)=-\int_0^x\rho(y)\,dy$, a simple calculation reveals that
\begin{eqnarray*}
\lefteqn{|g(\sigma)-g(\rho)|^2_{1,\alpha}}\\
&\lesssim& \int_0^\infty \big| \ip{\sigma'(x)}{D\psi(S(x))}
- \ip{\rho'(x)}{D\psi(R(x))} \big|^2 \alpha(x)\,dx\\
&& +\int_0^\infty \big| \ip{\sigma(x)}{D^2\psi(S(x))\sigma(x)}
- \ip{\rho(x)}{D^2\psi(R(x))\rho(x)} \big|^2 \alpha(x)\,dx.
\end{eqnarray*}
Since $\psi\in C^3_b(B_r(K))$, in particular $D\psi$ is Lipschitz and
bounded on $B_r(K)$, we have
\everymath{\displaystyle}
\begin{equation*}
\begin{array}{rll}
\big| \ip{\sigma'}{D\psi(S)} - \ip{\rho'}{D\psi(R)}\big|^2_\alpha &\lesssim&
\big| \ip{\sigma'}{D\psi(S)}_K - \ip{\rho'}{D\psi(S)}_K \big|^2_\alpha\\
&& +  \big| \ip{\rho'}{D\psi(S)}_K - \ip{\rho'}{D\psi(R)}_K \big|^2_\alpha\\
&\lesssim_\psi& |\sigma-\rho|^2_\fH + |\rho|^2_\fH
         \Big(\int_0^\infty |\sigma(x)-\rho(x)|_K\,dx\Big)^2\\
&\lesssim_\alpha& |\sigma-\rho|^2_\fH + |\rho|^2_\fH\,
               ||\sigma-\rho|_K|_H^2\\
&\leq& (1+|\rho|^2_\fH)|\sigma-\rho|^2_\fH
\end{array}
\end{equation*}
where the second last and last inequalities follow by (\ref{eq:pippo-1}) and
lemma \ref{lem:limo}, respectively.
\par\noindent
Similarly we have
\begin{eqnarray*}
\lefteqn{\big| \ip{\sigma}{D^2\psi(S)\sigma} 
- \ip{\rho}{D^2\psi(R)\rho} \big|^2_\alpha}\\
&\lesssim&  \big| \ip{\sigma}{D^2\psi(S)\sigma} 
- \ip{\rho}{D^2\psi(S)\rho} \big|^2_\alpha
+  \big| \ip{\rho}{(D^2\psi(S)-D^2\psi(R))\rho} \big|^2_\alpha.
\end{eqnarray*}
Note that for any operator $Q\in \mathcal{L}(K)$ we have
\begin{eqnarray*}
  |\ip{Qx}{x} - \ip{Qy}{y}|^2 &\leq& 2 |\ip{Qx}{x} - \ip{Qx}{y}|^2
  + 2 |\ip{Qx}{y} - \ip{Qy}{y}|^2\\
&\leq& 2|Qx|^2|x-y|^2 + 2|Q(x-y)|^2|y|^2\\
&\leq& 2|Q|^2(|x|^2+|y|^2)|x-y|^2,
\end{eqnarray*}
therefore, by Cauchy-Schwarz' inequality, (\ref{eq:pippo-4}), and lemma
\ref{lem:limo},
\begin{align*}
\lefteqn{\big| \ip{\sigma}{D^2\psi(S)\sigma} 
- \ip{\rho}{D^2\psi(S)\rho} \big|^2_\alpha} \\
&\lesssim_\psi \int_0^\infty (|\sigma(x)|_K^2 + |\rho(x)|_K^2)
        |\sigma(x)-\rho(x)|_K^2 \alpha(x)\,dx\\
&\lesssim \Big( \int_0^\infty (|\sigma(x)|_K^4 + |\rho(x)|_K^4) \alpha(x)\,dx
 \Big)^{1/2}
   \Big( \int_0^\infty |\sigma(x) -\rho(x)|_K^4) \alpha(x)\,dx \Big)^{1/2}\\
&\lesssim_\alpha (||\sigma|_K|_H^2 + ||\rho|_K|_H^2) \, ||\sigma-\rho|_K|_H^2
   \leq (|\sigma|_\fH^2 + |\sigma|_\fH^2) \,
        |\sigma-\rho|_\fH^2.
\end{align*}
Finally, since $D^2\psi$ is Lipschitz, applying (\ref{eq:pippo-1}),
(\ref{eq:pippo-4}) and lemma \ref{lem:limo}, we get
\begin{eqnarray*}
  |\ip{\rho}{(D^2\psi(S)-D^2\psi(R))\rho}|^2_\alpha &\lesssim_\psi&
  ||\rho|_K^2|_\alpha^2 \Big(\int_0^\infty |\sigma(x)-\rho(x)|_K\,dx\Big)^2\\
  &\lesssim_\alpha& |\rho|_\fH^4 \, ||\sigma-\rho|_K|_H^2
     \leq |\rho|_\fH^4 \, |\sigma-\rho|_\fH^2,
\end{eqnarray*}
from which the claim follows.
\end{proof}


\begin{proof}[Proof of theorem \ref{thm:1}]
  Let $u$, $v \in B_R(H)$. Then we have $|\sigma(t,u)-\sigma(t,v)|_\fH
  \leq N(1+R)|u-v|_H$. In fact, obvious steps show that
  \begin{align*}
  |\sigma(t,u)-\sigma(t,v)|^2_\fH &=
  \int_0^\infty |\sigma(t,x,u(x))_x-\sigma(t,x,v(x))_x|_K^2 \alpha(x)\,dx\\
  &\leq |B(t,u)-B(t,v)|_2^2 \leq N(1+R)^2 |u-v|^2_H,
  \end{align*}
  where the last inequality follows by proposition \ref{prop:locl1}.
  This estimate, together with proposition \ref{prop:locl2}, implies
  that there exists a constant $N$ such that $|f(t,u)-f(t,v)|_H \leq
  N(1+R^2)|u-v|_H$ for all $u$, $v\in B_R(H)$. In turns, this implies
  that $f$ is locally bounded, as a consequence of hypothesis (iii).
  Similarly, $B$ is locally bounded, as follows by (\ref{eq:buu}). The
  proof is then finished in a standard way appealing to theorem
  \ref{thm:mild1}.
\end{proof}

\begin{proof}[Proof of theorem \ref{thm:2}]
Let us denote by $\dot{H}$ the space $L_{2,\alpha}^1$ endowed with the norm
$$
|\phi|^2_* = \phi(\infty)^2 + \int_0^\infty \phi'(x)^2 \alpha(x)\,dx,
$$ 
which is equivalent to the norm of $H$. Moreover, define as
$\dot{H}_0$ the subspace of functions $\phi \in \dot{H}$ such that
$\phi(\infty)=0$. Then the semigroup $e^{tA}$ is a contraction in
$\dot{H}_0$, because
$$
|e^{tA}\phi|_*^2 = \int_t^\infty \phi'(x)^2 \alpha(x)\,dx
\leq |\phi|_H^2.
$$ 
Therefore we can apply theorem \ref{thm:mild2}, noting that the
necessary properties of $f$ and $B$ are proved as in the previous
proof. We thus obtain the existence and uniqueness of a solution $u$
with values in $\dot{H}$. But since the norms of $H$ and $\dot{H}$ are
equivalent, $u$ is also well defined as a process in
$\mathbb{H}_p(T)$.
\end{proof}

\begin{rmk}
  Note that instead of starting from equation (\ref{eq:musso}), one
  could start directly from an abstract evolution equation like
  (\ref{eq:ee}), with $B$ an operator depending on the whole forward
  curve $u(t,\cdot)$. In fact, roughly speaking, (\ref{eq:musso}) is
  just a special case of (\ref{eq:ee}) where $B$ is the Nemitski
  operator associated to $\sigma$. Unfortunately we have only been
  able to prove that such $B$ is locally Lipschitz with respect
  to $u$. Nothing forbids to assume that, in the general case, $B$ is
  Lipschitz, but even so we could only prove that $f$ is locally
  Lipschitz, hence mild solutions would still be only local.
\end{rmk}

\section{Examples}\label{sec:ex}
\subsection{Finite dimensional noise}
Let us consider, for simplicity, the case $K = \erre$ (the more
general case $K=\erre^d$, $d<\infty$, being similar). Let $M$ be a
real valued L\'evy process such that $M(1)$ admits an analytic
characteristic function, e.g. a jump-diffusion (i.e. the sum of a
Wiener process and a Poisson process) or a Gamma process (which are
discussed, for instance, in \cite{ALM}). Then, thanks to the
analyticity of the characteristic function of $M(1)$, we infer that
$\psi \in C^\infty(B_r)$ (see e.g. \cite{lukacs}). In particular,
since $B_r$ is compact, $\psi \in C^3_b(B_r)$, and all the hypotheses
on $M$ of section \ref{sec:mr} are satisfied.

\subsection{Infinitely many independent noise sources}
Consider a model of the type (\ref{eq:musso}), where $\sigma=\sum_k
\sigma^ke_k$ and $M$ is formally defined as
\begin{equation}\label{eq:formale}
M(t) = \sum_{k=1}^\infty \xi^k(t)e_k,
\end{equation}
where $\xi^k$, $k\in\enne$, are real independent L\'evy processes.  In
order to consider the problem in the setting developed above, we need
to establish conditions under which $M$ is a well defined L\'evy
process on the Hilbert space $K$.
\begin{lemma}\label{lem:formale}
  Let $\xi^k$, $k\in\enne$, be real independent L\'evy processes
  with characteristic triplets $(b^k,r^k,m^k)$, $k\in\enne$. If
  $b^\cdot$, $r^\cdot \in \ell_2$ and
$$
\sum_{k=1}^\infty \int_\erre (1\wedge x^2)\,m^k(dx) < \infty,
$$
then (\ref{eq:formale}) defines a L\'evy process on $K$ with
characteristic triplet $(b,R,m)$, where $b=\sum_{k=1}^\infty b^ke_k$,
$R: K \ni y \mapsto \sum r^k\ip{y}{e_k}e_k$, and $m:\mathcal{B}(K)\ni
A \mapsto \sum m^k(A^k)$, where $A^k$ denotes the projection of $A$ on
$\mathrm{span}(e_k)$.
\end{lemma}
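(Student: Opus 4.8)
The plan is to show that the finite-dimensional partial sums
$M_n(t) := \sum_{k=1}^n \xi^k(t)e_k$ converge in an appropriate sense to a well-defined $K$-valued L\'evy process, and then identify its characteristic triplet. First I would fix $t$ and check that $(M_n(t))_{n\in\enne}$ is a Cauchy sequence in $L_2(\Omega;K)$, which forces the limit $M(t)$ to exist and to be square integrable. This is where the hypotheses enter: writing $\eta^k = \xi^k(1)$, one has $\E\eta^k = b^k + \int_{|x|>1} x\,m^k(dx)$ and $\mathrm{Var}(\eta^k) = r^k + \int_\erre x^2\,m^k(dx)$; the assumptions $b^\cdot, r^\cdot \in \ell_2$ together with $\sum_k \int (1\wedge x^2)\,m^k(dx) < \infty$ (after splitting the L\'evy measure into its restrictions to $\{|x|\le 1\}$ and $\{|x|>1\}$, the latter being a finite measure by summability) guarantee $\sum_k \E|\eta^k|^2 < \infty$, hence $\E|M_n(t) - M_m(t)|_K^2 = t\sum_{k=m+1}^n \E|\eta^k - \E\eta^k|^2 + \big(\sum_{k=m+1}^n t\,\E\eta^k e_k\big)$-type terms $\to 0$. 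Actually I would be slightly careful: to get an honest L\'evy process one should either recenter or argue directly that the sum of the "large jump" parts converges almost surely by the three-series theorem and the Brownian-plus-small-jumps part converges in $L_2$ uniformly on compacts; combining these gives a.s. uniform convergence on compact time intervals, so the limit $M$ is c\`adl\`ag.

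Next I would verify that $M$ is genuinely a L\'evy process: stationary and independent increments pass to the limit because each $M_n$ has them and convergence is in probability (increments over disjoint intervals remain independent in the limit, and the law of $M(t)-M(s)$ depends only on $t-s$ by continuity of the limit operation). Then I would compute the characteristic functional: for $\zeta \in K$,
\[
\E e^{i\ip{\zeta}{M(t)}} = \lim_{n\to\infty}\prod_{k=1}^n \E e^{i\zeta^k \xi^k(t)}
= \lim_{n\to\infty} \exp\Big( t\sum_{k=1}^n \psi_k(\zeta^k)\Big),
\]
where $\psi_k$ is the L\'evy exponent of $\xi^k$ with triplet $(b^k,r^k,m^k)$. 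Expanding each $\psi_k(\zeta^k) = i b^k\zeta^k - \tfrac12 r^k(\zeta^k)^2 + \int_\erre (e^{i\zeta^k x} - 1 - i\zeta^k x\chi_{\{|x|\le 1\}})\,m^k(dx)$ and summing, the drift terms assemble into $i\ip{b}{\zeta}$ with $b = \sum b^k e_k \in K$, the Gaussian terms into $-\tfrac12\ip{R\zeta}{\zeta}$ with $R$ the stated diagonal operator (which is trace class precisely because $r^\cdot \in \ell_2$ — wait, trace class needs $\sum r^k < \infty$; here $r^\cdot \in \ell_2$ only gives Hilbert-Schmidt, so I should double-check whether the intended hypothesis is $r^\cdot \in \ell_1$, or whether $\ell_2$ suffices because $M$ is only assumed square integrable, not to have a trace-class Gaussian covariance; in any case the sum defining the Gaussian part converges), and the jump terms into $\int_K (e^{i\ip{\zeta}{x}} - 1 - i\ip{\zeta}{x}\chi_{B_1(K)})\,m(dx)$ with $m(A) = \sum_k m^k(A^k)$. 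I would check that $m$ is a genuine L\'evy measure on $K$, i.e. $\int_K (1\wedge |x|_K^2)\,m(dx) = \sum_k \int_\erre (1\wedge x^2)\,m^k(dx) < \infty$, which is exactly the stated hypothesis (using that under $m$ a point in $K$ is supported on a single coordinate axis, so $|x|_K = |x^k|$ for $x \in \mathrm{span}(e_k)$).

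The main obstacle I anticipate is the bookkeeping around the truncation function in the L\'evy-Khintchine formula: since each $\xi^k$ uses the one-dimensional cutoff $\chi_{\{|x|\le 1\}}$ while the limiting $K$-valued process should use $\chi_{B_1(K)}$, and on the support of $m$ these cutoffs actually agree (a point on the $k$-th axis lies in $B_1(K)$ iff its coordinate has modulus $\le 1$), so the drifts match up without any correction term — but this needs to be stated carefully, and one must make sure the interchange of $\lim_n$ and the integral in the L\'evy exponent is justified by the dominated convergence theorem using the summability hypothesis. A secondary point is checking that $M$ takes values in $K$ and not merely in a larger space: this follows from the $L_2$ (or a.s.) convergence of the partial sums in the norm of $K$ established in the first step. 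Once the characteristic functional is identified with the L\'evy-Khintchine form for the triplet $(b,R,m)$, uniqueness of the L\'evy-Khintchine representation finishes the proof.
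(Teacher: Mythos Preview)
Your approach is considerably more careful than the paper's, whose proof consists solely of the formal characteristic-function computation you outline as your third step: the paper writes $\E e^{i\ip{a}{M(t)}} = \prod_k \E e^{ia^k\xi^k(t)} = e^{-t\sum_k \lambda^k(a^k)}$, expands each $\lambda^k$ via L\'evy--Khintchine (with truncation $x/(1+x^2)$ rather than the indicator), and regroups the terms into the triplet $(b,R,m)$ --- without addressing convergence of the series defining $M$, c\`adl\`ag paths, or passage to the limit. Your proposal supplies the analytic scaffolding the paper omits: convergence of partial sums (via the large-jump/small-jump splitting you sketch, which is the right move since the hypothesis on $m^k$ does not by itself give finite second moments), preservation of independent stationary increments in the limit, and the bookkeeping between the one-dimensional and $K$-valued truncations, which indeed agree on the support of $m$ since that measure is concentrated on the coordinate axes.

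Your hesitation about $r^\cdot \in \ell_2$ is warranted and points to an imprecision in the lemma as stated: for the Gaussian component $\sum_k W^k(t)e_k$ (with $\mathrm{Var}\,W^k(1)=r^k$) to converge in $K$ one needs $\sum_k r^k < \infty$, i.e.\ $r^\cdot \in \ell_1$, and correspondingly $R$ must be trace class rather than merely Hilbert--Schmidt. The paper's purely formal calculation does not detect this because it never verifies that the characteristic exponent it writes down is the exponent of a genuine $K$-valued random variable. So your more rigorous route both proves the lemma (under the corrected hypothesis $r^\cdot\in\ell_1$) and exposes a gap that the paper's short argument glosses over.
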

\begin{proof}
One has, using the summation convention over repeated indices (when
no confusion arises),
$$
  \E e^{i\ip{a}{M(t)}} = \E e^{i\ip{a^ke_k}{\xi_t^k e_k}} =
  \E e^{ia_k\xi_t^k} = e^{-t \sum \lambda^k(a^k)},
$$
and
\begin{eqnarray*}
\lambda(a) &:=& (\sum\lambda^k)(a^k) = -ib^ka^k + \frac12 r^k(a^k)^2
-\sum \int_\erre \Big(e^{ia^kx} - 1 - i\frac{a^kx}{1+x^2}\Big)\,m^k(dx) \\
&=& -i\ip{b}{a} + \frac12\ip{Ra}{a}
- \int_H \Big(e^{i\ip{a}{x}} - 1 - i\frac{\ip{a}{x}}{1+|x|^2}\Big)\,m(dx),
\end{eqnarray*}
i.e. $\E e^{i\ip{a}{M(t)}}=e^{-t \lambda(a)}$, which is equivalent to
the claim of the lemma.
\end{proof}

Let us now consider the properties of the function $\psi$. We have
\begin{eqnarray*}
\psi(z) &=& \log \int_K e^{\ip{x}{z}}\,\mu(dx)
= \log \prod_{k=1}^\infty \int_\erre e^{xz^k}\,\mu^k(dx) \\
&=& \sum _{k=1}^\infty \log \int_\erre e^{xz^k}\,\mu^k(dx),
\end{eqnarray*}
where $\mu$ is the law of $M(1)$ and $\mu^k$ is the law of $\xi^k(1)$,
for all $k\in \enne$.  Let us define the function $\varphi:B_{\delta
  r}(\ell_2) \to \erre^\infty$ as
$$
\varphi: z^\cdot \mapsto \log \int_\erre e^{xz^k}\,\mu^k(dx).
$$
If the measures $\mu^k$ are such that the image of $B_{\delta
  r}(\ell_2)$ under $\varphi$ is contained in $\ell_1$, then
(\ref{eq:mah}) is satisfied. In particular, this is true if
$$
k \mapsto \log \int_{\erre\setminus[-1,1]} e^{|z^k|\,|x|}\,m^k(dx) \in \ell_1
$$
for all $z^\cdot \in B_{\delta r}(\ell_2)$, as it follows by theorem 25.3 of
\cite{sato}.
Consider for instance infinitely many independent Gamma processes
$(\xi^k)_{k\in\enne}$. Then the L\'evy measure of $\mu^k$ is given by
$$
m^k(dx) = c^k x^{-1} e^{-\alpha^k x}\,dx, \qquad x>0,
$$
where $c^k$, $\alpha^k$ are positive numbers. In the L\'evy-Kintchine
representation of $\mu^k$ one has $r^k=0$ and
$b^k=c^k(\alpha^k)^{-1}$. Therefore
$$
\sum_{k=1}^\infty \int_0^\infty (1\wedge x^2)\,m^k(dx)
\leq \sum_{k=1}^\infty \int_0^\infty x^2 \,m^k(dx),
$$
and the last integral is finite if
$$
\sum_{k=1}^\infty \int_0^\infty x^2 \,\mu^k(dx) =
\sum_{k=1}^\infty (\alpha^k)^{-2} (c^k + (c^k)^2).
$$
Therefore, assuming that $(\alpha^k)^{-1} \in \ell_\infty$ and $c^k
\in \ell_1 \cap \ell_2$, lemma \ref{lem:formale} implies that the
series $\sum_k \xi^k(t)e_k$ defines a L\'evy process on the Hilbert
space $K$.  We also have
$$
\log \int_\erre e^{xz^k}\,\mu_k(dx) =
- c^k \log(1-(\alpha^k)^{-1}z_k),
\qquad z^k < \alpha^k,
$$
hence the $\ell_1$ norm of the left-hand side is bounded from above by
$$
|c|_{\ell_1} \big|\log(1-|1/\alpha|_{\ell_\infty} \delta r)\big|,
$$
where $1/\alpha$ stands for the sequence $(1/\alpha^k)_{k\in\enne}$.
This implies that choosing $\delta$ and $r$ so that $\delta r <
|1/\alpha|_{\ell_\infty}^{-1}$, we obtain the local well-posedness of
an HJM model driven by infinitely many Gamma processes, as follows by
the results of section 2.

\section{Conclusions}
We have proved existence and uniqueness of local mild solutions to
Musiela's SPDE in a Hilbert space $H$ of absolutely continuous
functions, that seems to be the standard ``ambient space'' for forward
curves. While this space has several nice features, which are
extensively discussed in \cite{filipo}, its main drawback is probably
that it does not allow one to characterize forward curves at a given
time $T>0$ as solutions of an SDE in $H$ (or at least we failed to
find sufficient conditions). It is known that if $M$ is a Wiener
process, then a unique global solution exists in weighted $L_2$ spaces
(see e.g. \cite{GM-review}). On the other hand, such spaces are too
big, in the sense that very rough (with respect to $x$) forward curves
are allowed, and this should be ruled out in any sensible model for
the dynamics of forward rates (see also the discussion in
\cite{EkeTaf}). Therefore, it seems that one should find the ``right''
space to study Musiela's SPDE, satisfying the minimum requirement that
its elements admit a continuous modification, and allowing at the same
time to obtain existence and uniqueness of global mild solutions. This
problem, to the best of our knowledge, is not solved also in the case
of Brownian noise.

\subsection*{Acknowledgments}
This work was partially supported by the DFG through the SFB 611,
Bonn, and by the ESF through grant AMaMeF 969. This work was carried
out while the author was visiting the Max-Planck-Institut f\"ur
Mathematik in Leipzig supported by an EPDI fellowship. The author is
sincerely grateful to S.~Albeverio for helpful discussions on some
parts of the paper, to two anonymous referees and to E.~Eberlein for
suggestions which led to improvements of the presentation.

\let\oldbibliography\thebibliography
\renewcommand{\thebibliography}[1]{%
  \oldbibliography{#1}%
  \setlength{\itemsep}{-1pt}%
}

\bibliographystyle{amsplain}
\bibliography{ref}

\def\polhk#1{\setbox0=\hbox{#1}{\ooalign{\hidewidth
  \lower1.5ex\hbox{`}\hidewidth\crcr\unhbox0}}}
\providecommand{\bysame}{\leavevmode\hbox to3em{\hrulefill}\thinspace}
\providecommand{\MR}{\relax\ifhmode\unskip\space\fi MR }
\providecommand{\MRhref}[2]{%
  \href{http://www.ams.org/mathscinet-getitem?mr=#1}{#2}
}
\providecommand{\href}[2]{#2}
\begin{thebibliography}{10}

\bibitem{ALM}
S.~Albeverio, E.~Lytvynov, and A.~Mahnig, \emph{A model of the term structure
  of interest rates based on {L}\'evy fields}, Stochastic Process. Appl.
  \textbf{114} (2004), no.~2, 251--263. \MR{MR2101243 (2005h:60178)}

\bibitem{AMR2}
S.~Albeverio, V.~Mandrekar, and B.~R{\"u}diger, \emph{Existence of mild
  solutions for stochastic differential equations and semilinear equations with
  non-{G}aussian {L}\'evy noise}, Preprint SFB 611, Bonn, 2006.

\bibitem{AmbPro}
A.~Ambrosetti and G.~Prodi, \emph{A primer of nonlinear analysis}, Cambridge
  University Press, Cambridge, 1995. \MR{MR1336591 (96a:58019)}

\bibitem{BGJ}
K.~Bichteler, J.-B. Gravereaux, and J.~Jacod, \emph{Malliavin calculus for
  processes with jumps}, Gordon and Breach Science Publishers, New York, 1987.
  \MR{MR1008471 (90h:60056)}

\bibitem{BDMKR}
T.~Bj{\"o}rk, G.~Di~Masi, Yu. Kabanov, and W.~Runggaldier, \emph{Towards a
  general theory of bond markets}, Finance Stochast. \textbf{1} (1997),
  141--174.

\bibitem{DZ92}
G.~Da~Prato and J.~Zabczyk, \emph{Stochastic equations in infinite dimensions},
  Cambridge UP, 1992.

\bibitem{EbeJacRai}
E.~Eberlein, J.~Jacod, and S.~Raible, \emph{L\'evy term structure models:
  no-arbitrage and completeness}, Finance Stoch. \textbf{9} (2005), no.~1,
  67--88. \MR{MR2210928}

\bibitem{EbeOez}
E.~Eberlein and F.~{\"O}zkan, \emph{The defaultable {L}\'evy term structure:
  ratings and restructuring}, Math. Finance \textbf{13} (2003), no.~2,
  277--300. \MR{MR1967777 (2004a:91065)}

\bibitem{EbeRai}
E.~Eberlein and S.~Raible, \emph{Term structure models driven by general
  {L}\'evy processes}, Math. Finance \textbf{9} (1999), no.~1, 31--53.
  \MR{MR1849355 (2002f:91039)}

\bibitem{EkeTaf}
I.~Ekeland and E.~Taflin, \emph{A theory of bond portfolios}, Ann. Appl.
  Probab. \textbf{15} (2005), no.~2, 1260--1305. \MR{MR2134104 (2006b:91067)}

\bibitem{filipo}
D.~Filipovi{\'c}, \emph{Consistency problems for {H}eath-{J}arrow-{M}orton
  interest rate models}, Lecture Notes in Mathematics, vol. 1760,
  Springer-Verlag, Berlin, 2001. \MR{MR1828523 (2002e:91001)}

\bibitem{FilTap}
D.~Filipovi{\'c} and S.~Tappe, \emph{Existence of {L\'e}vy term structure
  models}, preprint. To appear in \emph{Finance Stoch.}, 2007.

\bibitem{GM-review}
B.~Goldys and M.~Musiela, \emph{Infinite dimensional diffusions, {K}olmogorov
  equations and interest rate models}, Option pricing, interest rates and risk
  management, Handb. Math. Finance, Cambridge Univ. Press, Cambridge, 2001,
  pp.~314--335. \MR{1 848 556}

\bibitem{Gyo-semimg}
I.~Gy{\"o}ngy, \emph{On stochastic equations with respect to semimartingales.
  {III}}, Stochastics \textbf{7} (1982), no.~4, 231--254.

\bibitem{HZ}
H.~D. Hamedani and B.~Z. Zangeneh, \emph{Stopped {D}oob inequality for {$p$}-th
  moment, {$0<p<\infty$}, stochastic convolution integrals}, Stochastic Anal.
  Appl. \textbf{19} (2001), no.~5, 771--798. \MR{MR1857896 (2002h:60120)}

\bibitem{HauSei}
E.~Hausenblas and J.~Seidler, \emph{A note on maximal inequality for stochastic
  convolutions}, Czechoslovak Math. J. \textbf{51(126)} (2001), no.~4,
  785--790. \MR{MR1864042 (2002j:60092)}

\bibitem{Ichi}
A.~Ichikawa, \emph{Some inequalities for martingales and stochastic
  convolutions}, Stochastic Anal. Appl. \textbf{4} (1986), no.~3, 329--339.
  \MR{MR857085 (87m:60105)}

\bibitem{JZ}
J.~Jakubowski and J.~Zabczyk, \emph{Exponential moments for {HJM} models with
  jumps}, Finance Stoch. \textbf{11} (2007), no.~3, 429--445. \MR{MR2322920}

\bibitem{Knoche-CRAS}
C.~Knoche, \emph{S{PDE}s in infinite dimension with {P}oisson noise}, C. R.
  Math. Acad. Sci. Paris \textbf{339} (2004), no.~9, 647--652. \MR{MR2103204}

\bibitem{Kote-sub}
P.~Kotelenez, \emph{A submartingale type inequality with applications to
  stochastic evolution equations}, Stochastics \textbf{8} (1982/83), no.~2,
  139--151.

\bibitem{Kote-Doob}
\bysame, \emph{A stopped {D}oob inequality for stochastic convolution integrals
  and stochastic evolution equations}, Stochastic Anal. Appl. \textbf{2}
  (1984), no.~3, 245--265.

\bibitem{LR-heat}
P.~Lescot and M.~R{\"o}ckner, \emph{Perturbations of generalized {M}ehler
  semigroups and applications to stochastic heat equations with {L\'e}vy noise
  and singular drift}, Potential Anal. \textbf{20} (2004), no.~4, 317--344.

\bibitem{lukacs}
E.~Lukacs, \emph{Characteristic functions}, Hafner Publishing Co., New York,
  1970. \MR{MR0346874 (49 \#11595)}

\bibitem{CM-musdet}
C.~Marinelli, \emph{Well-posedness and invariant measures for {HJM} models with
  deterministic volatility and {L\'evy} noise}, preprint arXiv:math.PR/0702622,
  2007.

\bibitem{Met}
M.~M{\'e}tivier, \emph{Semimartingales}, Walter de Gruyter \& Co., Berlin,
  1982. \MR{MR688144 (84i:60002)}

\bibitem{MP-Z}
M.~M{\'e}tivier and G.~Pistone, \emph{Une formule d'isom\'etrie pour
  l'int\'egrale stochastique hilbertienne et \'equations d'\'evolution
  lin\'eaires stochastiques}, Z. Wahrscheinlichkeitstheorie und Verw. Gebiete
  \textbf{33} (1975/76), no.~1, 1--18. \MR{MR0383527 (52 \#4408)}

\bibitem{MP}
\bysame, \emph{Sur une \'equation d'\'evolution stochastique}, Bull. Soc. Math.
  France \textbf{104} (1976), no.~1, 65--85. \MR{MR0420854 (54 \#8866)}

\bibitem{PZ-levico}
Sz. Peszat and J.~Zabczyk, \emph{Stochastic heat and wave equations driven by
  an impulsive noise}, Stochastic partial differential equations and
  applications---VII, Lect. Notes Pure Appl. Math., vol. 245, Chapman \&
  Hall/CRC, Boca Raton, FL, 2006, pp.~229--242. \MR{MR2227232}

\bibitem{PZ07}
\bysame, \emph{Heath-{J}arrow-{M}orton-{M}usiela equation of bond market},
  IMPAN preprint, 2007.

\bibitem{ProTal-Euler}
Ph. Protter and D.~Talay, \emph{The {E}uler scheme for {L}\'evy driven
  stochastic differential equations}, Ann. Probab. \textbf{25} (1997), no.~1,
  393--423. \MR{MR1428514 (98c:60063)}

\bibitem{Protter}
Ph.~E. Protter, \emph{Stochastic integration and differential equations},
  second ed., Springer-Verlag, Berlin, 2004. \MR{MR2020294 (2005k:60008)}

\bibitem{sato}
Ken-iti Sato, \emph{L\'evy processes and infinitely divisible distributions},
  Cambridge University Press, Cambridge, 1999. \MR{MR1739520 (2003b:60064)}

\bibitem{tehranchi}
M.~Tehranchi, \emph{A note on invariant measures for {HJM} models}, Finance
  Stoch. \textbf{9} (2005), no.~3, 389--398. \MR{MR2211714}

\bibitem{vargiolu}
T.~Vargiolu, \emph{Invariant measures for the {M}usiela equation with
  deterministic diffusion term}, Finance Stoch. \textbf{3} (1999), no.~4,
  483--492. \MR{2002i:60080}

\end{thebibliography}

\end{document}